\documentclass[12pt]{article}
\usepackage{graphicx}
\usepackage{amsmath}
\usepackage{amsthm}
\usepackage{graphics}
\usepackage{epsfig}
\usepackage{amssymb}
\usepackage{enumitem}
\usepackage{hyperref}
\usepackage{dsfont}
\usepackage{float}
\usepackage{verbatim}
\usepackage{authblk}
\usepackage{nicematrix}
\usepackage{booktabs}

\usepackage[toc,page]{appendix}
\usepackage{xcolor}
 \usepackage{setspace}
\usepackage{extarrows}

\usepackage[sort&compress]{natbib}
\usepackage{subfig}
\usepackage{lmodern}

\usepackage{pdflscape}
\usepackage{enumitem}
\allowdisplaybreaks

\newcommand{\lcb}{\left\{}
\newcommand{\rcb}{\right\}}

\let\originalleft\left
\let\originalright\right
\renewcommand{\left}{\mathopen{}\mathclose\bgroup\originalleft}
\renewcommand{\right}{\aftergroup\egroup\originalright}

\newcommand{\Var}{\operatorname{Var}}

\newcommand\as{\operatorname{a. s.}}
\newcommand{\tr}{\operatorname{tr}}
\newcommand{\supp}{\operatorname{supp}}

\newcommand{\cond}{\stackrel{\mathcal{D}}{\to}}
\newcommand{\conp}{\stackrel{\mathbb{P}}{\to}}
\newcommand{\conas}{\stackrel{\as}{\to}}

\newcommand{\diag}{\operatorname{diag}}

\newcommand{\bfA}{\mathbf{A}}

\newcommand{\bfI}{\mathbf{I}}
\newcommand{\bfSigmahat}{	\hat{ \mathbf{\Sigma} }}

\newcommand{\bfSigma}{\mathbf{\Sigma}}

\newtheorem{theorem}{Theorem}[section]
\newtheorem{lemma}{Lemma}[section]

\newtheorem{corollary}{Corollary}[section]

\newtheorem{remark}{Remark}[section]

\numberwithin{equation}{section}

\allowdisplaybreaks

\begin{document}
\title{Central limit theorems for high-dimensional ridge regression under uniform subsampling}
\date{\today}
\author[1]{Thomas Lam}
\author[2]{Nina Dörnemann}
\author[1]{Holger Dette}

\affil[1]{Department of Mathematics, Ruhr University Bochum, Germany}
\affil[2]{Department of Mathematics, Aarhus University, Denmark}

\maketitle
 \begin{abstract}
 {\noindent We study ridge regression based on uniformly subsampled observations in high-dimensional linear models, where the number of predictors and the sample size grow proportionally. Subsampling is implemented through independent Bernoulli indicators with a common selection probability. We establish central limit theorems for linear contrasts of the ridge estimator, allowing for non-Gaussian predictors and general population covariance matrices. Under a vanishing-signal assumption, we obtain an explicit bias correction in the isotropic case and develop consistent estimators of the asymptotic variances.  These results enable asymptotically valid tests for linear contrasts of the regression parameter in the isotropic setting. Our proofs exploit the resolvent representation of the ridge estimator and combine leave-one-out arguments with deterministic equivalents from random matrix theory.}
 
%    We study ridge regression for uniformly subsampled observations in high-dimensional linear models, where the number of predictors and the sample size grow proportionally. The subsampling mechanism is modeled by independent and identically distributed Bernoulli random variables. 
%    We establish a central limit theorem for linear contrasts of the ridge estimator in a vanishing-signal regime, where the Euclidean norm of the regression parameter converges  to zero.
%  Our analysis neither requires  Gaussian predictors nor an isotropic population covariance matrix. We construct a consistent estimator of the asymptotic variance of the ridge estimator. In the isotropic case, we further derive an asymptotically valid hypothesis test for linear contrasts of the regression parameter. Our analysis builds on the resolvent representation of the ridge estimator. We combine leave-one-out arguments with deterministic equivalents from random matrix theory to establish asymptotic normality and characterize the asymptotic variance.
  \end{abstract}

MSC2020 subject classifications: Primary 62J07, 62F12; secondary 60B20.

Keywords and phrases: Ridge regression; high-dimensional linear models;
uniform subsampling; asymptotic normality; variance estimation.

\vspace{1cm}

\section{Introduction}

Statistical inference in high-dimensional linear models is challenging because regularization introduces non-negligible bias in the estimation of linear contrasts of the regression parameter. A substantial literature has investigated the asymptotic properties of ridge regression in the proportional high-dimensional regime, including prediction risk and distributional results; see, for example, \cite{Dicker2016,DobribanWager2018,Thrampoulidis2018,HastieEtAl2022,BellecZhang2023,han:xu:2026}. While recent contributions allow for non-Gaussian predictors and general population covariance matrices,  distributional results often rely on strong tail assumptions, such as the sub-Gaussian conditions in \cite{han:xu:2026}. 

In this paper, we investigate the asymptotic distribution of linear contrasts of ridge estimators under uniform subsampling, allowing for general population covariance matrices and requiring only bounded eighth moments of the standardized predictor coordinates. Subsampling reduces computational costs by fitting models to randomly selected subsets of the observations;
%Statistical inference in high-dimensional linear models requires methods that account for the effects of regularization on estimationuncertainty. Ridge regression is widely used in this setting,but the bias introduced by regularization poses challenges for inference on linear contrasts of the regression parameter. Several existing asymptotic results for regularized estimators rely on Gaussian predictors or isotropic population covariance matrices; see, for example, \cite{Dicker2016, Thrampoulidis2018, DobribanWager2018,BellecZhang2023,han:xu:2026}.
%This motivates developing distributional guarantees under more general assumptions on the predictor distribution and covariance structure. 
%We combine this question with subsampling, a widely used approach to reducing computational costs by fitting models to a randomly selected subset of the observations; 
see, for example, \cite{MaMahoneyYu2015,CohenEtAl2015,LeeNg2020}. For ridge regression, subsampling changes the effective dimension-to-sample-size ratio and consequently affects both the regularization bias and the variance of the estimator. Under Bernoulli sampling, the subsample size is itself random. A central question is therefore how to account for these effects in the asymptotic distribution of linear contrasts under general assumptions on the predictor distribution and covariance structure.
% We combine this question with another statistical approach, namely random  subsampling, which offers a practical way to reduce computational costs by constructing estimators from a smaller number of observations. \ND{reference for subsampling}  For ridge regression, this changes the effective dimension-to-sample-size ratio and consequently the regularization bias and variance of the estimator. Under Bernoulli sampling, the subsample size is itself random. A central issue is therefore how to account for these effects in the centering and variance of linear contrasts when the predictors are neither Gaussian nor isotropic.
Our main contributions can be summarized as follows:
\begin{itemize}
   \item 
   We establish central limit theorems for appropriately centered linear contrasts of the subsampled ridge estimator, allowing for non-Gaussian predictors and general population covariance matrices
(Theorems~\ref{thm_main_CLT_identity_cov} and~\ref{thm_CLT_arbitrary_cov}). In the isotropic case, a vanishing-signal assumption allows us to obtain an explicit bias correction.
 %  We establish central limit theorems for appropriately centered linear contrasts of the subsampled ridge estimator in a vanishing-signal regime, where the Euclidean norm of the regression parameter tends to zero. Our results allow for non-Gaussian predictors and both spherical and general covariance matrices (Theorems~\ref{thm_main_CLT_identity_cov} and~\ref{thm_CLT_arbitrary_cov}).

\item We construct consistent variance estimators in both covariance settings (Theorem~\ref{thm_consistent_variance_estimator_identity_cov} and Corollary~\ref{cor_variance_estimator_arbitrary_cov}). These estimators combine an estimator of the regression error variance with an observable quadratic form involving the subsampled sample covariance matrix.

\item In the isotropic  covariance setting, we develop a hypothesis test for linear contrasts of the regression parameter with asymptotically correct size (Theorem~\ref{thm_test_normal} and Corollary~\ref{cor_test_performance}). We establish consistency against sufficiently separated alternatives and characterize its asymptotic power under local alternatives (Corollary~\ref{cor_test_performance}).
\end{itemize}

Our theoretical analysis builds on the resolvent representation
of the ridge estimator. We combine leave-one-out arguments with recent results on
deterministic equivalents in
\cite{StarkDoernemann2026} to establish asymptotic normality
and characterize the relevant variance quantities. The following subsection introduces the model and estimator
and provides an overview of the main results.

\subsection{Overview of the main results} \label{sec_overview}
While the precise statements of our results require additional
notation and assumptions, their main features can be presented
using only minimal notation.
Consider the linear model
\[
    Y_{i,n} = X_{i,n}^\top\beta_0+\varepsilon_{i,n},
    \qquad i=1,\ldots,n,
\]
where $X_{i,n}\in\mathbb{R}^p$ has mean zero and population covariance
matrix $\bfSigma_n$. We work in the proportional regime
$p/n\to\kappa\in(0,\infty)$. Observations are selected independently
with probability $r_n\to R\in(0,1]$, using  independent Bernoulli indicators
$\delta_{1,n},\ldots,\delta_{n,n}$ that are independent of the data.
For a fixed regularization parameter $\tau>0$, the subsampled ridge
estimator is
\begin{align} \label{eq_min_prob}
    \widehat\beta
    =
    \operatorname*{arg\,min}_{b\in\mathbb{R}^p}
    \left\{
        \frac{1}{n}\sum_{i=1}^n
        \delta_{i,n}(Y_{i,n}-X_{i,n}^\top b)^2
        +\tau\|b\|^2
    \right\}.
\end{align}
Our interest lies in linear contrasts $c_n^\top\widehat\beta$
for prescribed deterministic vectors $c_n\in\mathbb{R}^p$.
Under suitable assumptions, we establish central limit theorems
of the schematic form
\begin{align} \label{eq:clt_overview}
    \frac{\sqrt{nr_n}}{s_n\|c_n\|}
    c_n^\top(\widehat\beta-\bfA_n\beta_0)
    \cond \mathcal{N}(0,1), \quad n,p\to\infty.
\end{align}
where the observable $p\times p$ matrix $\bfA_n$ accounts for the bias induced by regularization
and $s_n^2$ denotes the corresponding variance. We construct a consistent estimator
$\widehat s_n^2$ of $s_n^2$; see
Theorem~\ref{thm_consistent_variance_estimator_identity_cov}
and Corollary~\ref{cor_variance_estimator_arbitrary_cov}. 
The precise form of $\bfA_n$ depends on the covariance structure.
When $\bfSigma_n=\bfI_p$, the centering term $c_n^\top \bfA_n\beta_0$ is a scalar multiple of
$c_n^\top\beta_0$ (Theorem~\ref{thm_main_CLT_identity_cov}).
Consequently, under the null hypothesis
\[
    H_{0,n}:\ c_n^\top\beta_0=\theta_n,
\]
the CLT in \eqref{eq:clt_overview}, after replacing $s_n$ by $\hat s_n$, yields an asymptotically valid test (Theorem~\ref{thm_test_normal}
and Corollary~\ref{cor_test_performance}). 
For general population covariance matrices $\bfSigma_n$, the
matrix $\bfA_n$ takes a more complex form involving the resolvent of the subsample covariance matrix (Theorem~\ref{thm_CLT_arbitrary_cov}).

\subsection{Related work}\label{sec_related_work}

We review related work on high-dimensional regression and  statistical
inference, and conclude with a more detailed comparison
between our contributions and the most closely related results.

The asymptotic behavior of ridge-regularized estimators in the zero-signal setting has been investigated by \cite{ElKaroui2013}. Subsequently, \cite{ElKaroui2018} extended this analysis to nonzero regression parameters for elliptical-like predictors. Under suitable assumptions, the asymptotic estimation error was characterized and a central limit theorem for individual coordinates of the estimator was established. The class of loss functions considered in the latter work has bounded derivatives and does not include the squared loss. Furthermore, \cite{Thrampoulidis2018} characterized the asymptotic squared estimation error for a broader class of convex regularization, including ridge, under Gaussian designs. Related risk results for ridge regression with weighted $\ell_2$-regularization were obtained by \cite{WuXu2020}. More recently, \cite{han:xu:2026} established distributional
approximations for ridge and ridgeless estimators under sub-Gaussian
designs with general population covariance matrices. Their results
include weighted $\ell_q$ risk characterizations and confidence
intervals with asymptotically correct average coverage across
coordinates.
% For the Lasso, introduced by \cite{Tibshirani1996}, several debiasing methods have been proposed. For instance, \cite{ZhangZhang2014} proposed a low-dimensional projection approach, while \cite{javanmard:montanari:2014} developed a method based on a convex optimization problem. Both approaches enable asymptotically valid statistical inference under suitable assumptions.
A further line of research concerns
statistical inference for regularized estimators.
\cite{BellecZhang2023} developed debiasing procedures for convex regularized estimators, including ridge regression, and established asymptotic normality for most normalized contrast directions under suitable Gaussian assumptions. In single-index models with Gaussian covariates, \cite{Bellec2025} established central limit theorems for most coordinates of suitably adjusted regularized $M$-estimators. For fixed design matrices, \cite{GaoTsay2024} proposed a debiasing method based on ridge regression and ridge screening that enables inference for linear functionals under sparsity conditions.

Estimation of the regression error variance provides another
related line of research. Under a vanishing-signal assumption, \cite{LiuZhengFeng2020} proposed a consistent estimator of the regression error variance based on ridge regression in the high-dimensional regime. \cite{ChoiPark2024} further studied this estimator beyond the vanishing-signal setting. 

%Subsampling has been investigated in both classical and high-dimensional settings. In the classical fixed-dimensional setting, \cite{ChenDetteYu2024} proposed a subsampling approach for linear classification problems and established asymptotic properties and efficiency improvements for the resulting estimators. 
In the high-dimensional framework, subsampling has been investigated in connection with bagging procedures. In particular, \cite{Patil2023} characterized the asymptotic prediction risk of bagged ridge and ridgeless estimators, while \cite{Bellec2026} studied the dependence between estimators constructed from overlapping subsamples in robust linear and logistic regression. %A related approach to sampling for the estimation of linear contrasts is the $c$-optimal design framework of \cite{EftekhariBanerjeeRitov2023} for Lasso estimation. 

\vspace{0.5cm}

Among these contributions, we discuss three particularly closely
related works in more detail to clarify their connection to our results.
\cite{BellecZhang2023} and \cite{Bellec2025} developed  central limit theorems and inferential
procedures for broad classes of regularized estimators under Gaussian designs. In contrast, our analysis focuses on ridge regression in linear models
and allows for non-Gaussian predictors with general population covariance matrices.  The connection to \cite{Bellec2026} concerns subsampling in the proportional regime. That work studies the dependence between estimators fitted to overlapping subsamples of fixed size and its implications for
bagging. Our focus is instead on central limit theorems and consistent
variance estimation for linear contrasts of a single ridge estimator
under Bernoulli subsampling.
To the best of our knowledge, our results provide the first central limit theorems and consistent variance estimators for linear contrasts of ridge estimators under Bernoulli subsampling, allowing for non-Gaussian predictors with general population covariance matrices under only bounded eighth-moment assumptions.
\section{Theoretical Results}
% \textbf{Notations}
% Let $X_i\in \mathbb{R}^{p}$ be a vector of predictors and $Y_i\in \mathbb{R}$ a response. We assume that the predictors and responses have a linear relation. To be more precise, 
% \begin{align*}
%     Y_i=\varepsilon_i + X_i^\top \beta_0    ~~~~(i=1, \ldots , n) ,
% \end{align*}
% where $\varepsilon_i$ are unknown random errors and $\beta_0\in\mathbb{R}^{p}$ is the true regression vector. Let $\hat{\beta}$ be the ridge estimator based on uniformly subsampled data. To be more precise,
% \begin{align}\label{eq_min_prob}
%     \hat{\beta}=\argmin\limits_{\beta \in \mathbb{R}^p} \frac{1}{n}\sum_{i=1}^{n} \delta_i \left(Y_i - X_i^\top \beta\right)^2 + \tau \left\Vert \beta\right\Vert^2,
% \end{align}
% where $\tau >0$ is a constant and $\delta_1, \ldots, \delta_n$ are i.i.d. Bernoulli distributed random variables with success probability $\mathbb{P} (\delta_i = 1) =r$ and $\| \cdot\|$ denotes the Euclidean norm. 
We consider the model and subsampled ridge estimator introduced
in Section~\ref{sec_overview}. We now state the assumptions and
introduce the additional notation needed for our main results.
For square matrices $\mathbf{A}$, $\Vert \mathbf{A} \Vert_2$ denotes the spectral norm and $\Vert \mathbf{A} \Vert_{F}$ the Frobenius norm.
\medskip

\textbf{Assumptions}
\begin{enumerate}[label=(A\arabic*)]
    \item \label{ass_sample_to_dimension_ratio}As $n \to \infty$, we have $p=p(n)$ with $\kappa_n:= p/n \to \kappa \in (0, \infty)$ and let $\delta_{1,n},\ldots,\delta_{n,n}$ 
    be independent identically Bernoulli distributed random variables with success probability $r_n$, where  
    $r_n \to R \in (0,1]$. 
    \item \label{ass_random_vectors_MP} We assume that the random vectors $X_{i,n}$ can be represented as
    \begin{align*}
        X_{i,n} = \bfSigma_n^{1/2} Z_{i,n},
    \end{align*}
    where $\bfSigma_n^{1/2}$ is the nonnegative definite square root of the covariance matrix $\bfSigma_n \in \mathbb{R}^{p\times p}$ and $Z_{i,n}$ is a $p$-dimensional random vector. The random vectors $Z_{1,n}, \ldots ,Z_{n,n}$ are assumed to be independent and identically distributed. Furthermore, for each $n\in \mathbb{N},$ we assume the entries of $Z_{i,n}$ are real-valued independently distributed random variables satisfying $\mathbb{E}[Z_{i,n}(j)]=0$, $\mathbb{E}[Z_{i,n}(j)^2]=1$ and
        \begin{align*}
           M_8:=\sup_{n\in \mathbb{N}}\max_{1 \leq i \leq n, 1 \leq j \leq p} \mathbb{E}[Z_{i,n}(j)^8]&<\infty.
        \end{align*}
    \item \label{ass_spectral_norm_cov_matrix}The matrices $\bfSigma_n$ satisfy
        \begin{align*}
            \sup_{n\in \mathbb{N}} \left\Vert \bfSigma_n\right\Vert_2<\infty.
        \end{align*}
        \item\label{ass_ESD_conv} The sequence of empirical spectral distributions $H_n = F_{\bfSigma_n}$ converges weakly to a probability distribution $H$, i.e
        \begin{align*}
            H_n \cond H.
        \end{align*}
    \item \label{ass_random_errors} For each $n\in \mathbb{N}$, let $\varepsilon_{1,n}, \ldots, \varepsilon_{n,n}$ be independent and identically distributed random errors with $\mathbb{E}[\varepsilon_{1,n}] = 0, \Var(\varepsilon_{1,n})= \sigma_{\varepsilon}^2 \in (0, \infty)$ and $\sup_{n\in \mathbb{N}}\mathbb{E}[\varepsilon_{1,n}^4]<\infty$.
    %For $i=1, \ldots, n,$ the random errors $\varepsilon_i$ are independent and identically distributed with $\mathbb{E}\left[\varepsilon_i\right]=0, \Var\left(\varepsilon_i\right)= \sigma_{\varepsilon}^2 >0 $ and $\mathbb{E}\left[\varepsilon_i^4\right]<\infty$.
    \item \label{ass_mutual_independence}
    The random variables $\lcb X_{i,n}\rcb_{i=1}^{n}$, $\lcb \varepsilon_{i,n}\rcb_{i=1}^{n}$ and $\lcb \delta_{i,n}\rcb_{i=1}^{n}$ are mutually independent.
\end{enumerate}
\medskip

We now start with the analysis of the the ridge estimator considering the isotropic and general case separately.  Observing  the minimization problem \eqref{eq_min_prob}, straightforward algebra yields
\begin{align}
    \hat{\beta} &= \left(\frac{1}{n}\mathbf{X}_n^\top {\Delta}_n \mathbf{X}_n + \tau \mathbf{I}_p\right)^{-1}\frac{1}{n}\mathbf{X}_n^\top {\Delta}_n \mathcal{Y}_n = \left(\frac{1}{n}\mathbf{X}_n^\top {\Delta}_n\mathbf{X}_n+ \tau \mathbf{I}_p\right)^{-1}\frac{1}{n}\mathbf{X}_n^\top {\Delta}_n \left(\mathbf{X}_n \beta_0 + \mathcal{E}_n\right)\nonumber
    \\
    &= \beta_0 - \tau \left(\frac{1}{n}\mathbf{X}_n^\top {\Delta}_n \mathbf{X}_n + \tau \mathbf{I}_p\right)^{-1}\beta_0 + \left(\frac{1}{n}\mathbf{X}_n^\top {\Delta}_n \mathbf{X}_n + \tau \mathbf{I}_p\right)^{-1}\frac{1}{n}\mathbf{X}_n^\top \Delta_n \mathcal{E}_n,\nonumber
    \\
    &= \beta_0 - \tau \mathbf{R}(\mathbf{B}_n,\tau)\beta_0 + \mathbf{R}(\mathbf{B}_n,\tau)\frac{1}{n}\mathbf{X}_n^\top \Delta_n \mathcal{E}_n,\label{eq_rep_ridge_estimator}
\end{align}
where
\begin{align*}
    \mathbf{X}_n^\top &= \left(X_{1,n},\ldots, X_{n,n}\right) \in \mathbb{R}^{p \times n},
    \\
    \mathbf{I}_p &= \diag(1,\ldots,1) \in \mathbb{R}^{p\times p},
    \\
    \Delta_n &= \diag\left(\delta_{1,n}, \ldots, \delta_{n,n}\right) \in \mathbb{R}^{n\times n},
    \\
    \mathcal{Y}_n &= (Y_{1,n},\ldots,Y_{n,n})^\top\in \mathbb{R}^n
    \\
    \mathcal{E}_n &= \left(\varepsilon_{1,n}, \ldots, \varepsilon_{n,n}\right)^\top \in \mathbb{R}^n,
    \\
    \mathbf{B}_n &= \frac{1}{n}\mathbf{X}_n^\top \Delta_n \mathbf{X}_n \in \mathbb{R}^{p\times p},
    \\
    \mathbf{R}(\mathbf{A},z) &= \left(\mathbf{A}+z \mathbf{I}_p\right)^{-1} \in \mathbb{R}^{p\times p}.
\end{align*}

\subsection{The isotropic case: \texorpdfstring{$\bfSigma_n = \mathbf{I}_p$}{Identity Matrix}}\label{sec_identity}
In this section, we assume that $\bfSigma_n=\mathbf{I}_p$. Hence $H_n = H = \delta_{\{1\}}$, which denotes the Dirac measure with mass in point $1$.
Let \begin{align}\label{eq_def_v_function_identity_cov}
    v(\kappa, z) = \frac{-(1-\kappa +z)-\sqrt{(1-\kappa+z)^2-4z}}{2z}, \quad z\in \mathbb{C}\setminus\mathbb{R}_{\geq 0},
\end{align}
denote the Stieltjes transform of the companion Marchenko-Pastur distribution $\underline{F}^{\kappa, \delta_{\{1\}}}$,  where the complex root is chosen such that $\Im(v(\kappa,z))\Im(z)>0$ for $z\in \mathbb{C}\setminus \mathbb{R}$, and denote its derivative by 
    \begin{align}\label{eq_def_v_derivative_function_identity_cov}
        v^\prime(\kappa,z)=\frac{d}{dz}v(\kappa,z)=\frac{1}{2}\left\{\frac{-(z-\kappa-1)}{z\sqrt{(1-\kappa+z)^2-4z}}+\frac{1-\kappa+\sqrt{(1-\kappa+z)^2-4z}}{z^2}\right\}, ~~ z\in \mathbb{C}\setminus \mathbb{R}_{\geq 0}.
    \end{align}
Furthermore, let $v_n(\kappa_n,z)$ denote the Stieltjes transform of the companion Marchenko-Pastur distribution $\underline{F}^{\kappa_n,H_n}$. Since $H_n=H=\delta_{\{1\}}$, we have $v_n(\kappa_n,z) = v(\kappa_n,z)$.
    \begin{theorem}\label{thm_main_CLT_identity_cov}
    Suppose assumptions \ref{ass_sample_to_dimension_ratio} - \ref{ass_mutual_independence} are satisfied. Let $c_n\in \mathbb{R}^p\setminus \{\mathbf{0}_p\}$ and suppose that $\Vert \beta_0 \Vert = o(1)$. Then,
    \begin{align*}
        \frac{\sqrt{r_n n}}{\Vert c_n\Vert} c_n^\top \left(\hat{\beta}-\left(1-a_n(N_n)\right)\beta_0\right)\cond \mathcal{N}(0, \sigma^2),
    \end{align*}
    with $\sigma^2$ given by
    \begin{align*}
        \sigma^2=\sigma_{\varepsilon}^2\frac{v^\prime\left(\frac{\kappa}{R},-\frac{\tau}{R}\right)}{\left(v\left(\frac{\kappa}{R},-\frac{\tau}{R}\right)+1\right)^2} > 0
    \end{align*}
    and
    \begin{align*}
        a_n(N_n) = \begin{cases}
            1 & N_n = 0
            \\
            \frac{1}{v\left(\frac{p}{N_n}, -\frac{n}{N_n}\tau\right)+1} & N_n >0
        \end{cases},
    \end{align*}
    where $N_n=\sum\limits_{i=1}^{n} \delta_{i,n}$.
\end{theorem}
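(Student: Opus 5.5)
From the representation \eqref{eq_rep_ridge_estimator}, and writing $\mathbf{R}_n=\mathbf{R}(\mathbf{B}_n,\tau)$, I would decompose
\begin{align*}
c_n^\top\left(\hat\beta-(1-a_n(N_n))\beta_0\right)
= \underbrace{c_n^\top\left(a_n(N_n)\mathbf{I}_p-\tau\mathbf{R}_n\right)\beta_0}_{=:T_{1,n}}
+\underbrace{c_n^\top\mathbf{R}_n\tfrac1n\mathbf{X}_n^\top\Delta_n\mathcal{E}_n}_{=:T_{2,n}} .
\end{align*}
Throughout I would condition on $\Delta_n$, or equivalently on $N_n$, and work on the event $\{N_n>0\}$, whose probability tends to one. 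Conditionally on $N_n=m$, we have $\mathbf{B}_n=\frac{m}{n}\mathbf{S}_m$, where $\mathbf{S}_m$ is the sample covariance of $m$ i.i.d. isotropic vectors. Hence $\tau\mathbf{R}_n=\frac{n}{m}\tau(\mathbf{S}_m+\frac{n}{m}\tau\mathbf{I}_p)^{-1}$. The anisotropic deterministic equivalent for $\mathbf{S}_m$ (from \cite{StarkDoernemann2026}) then gives
\[
c_n^\top\left(\mathbf{S}_m+z\mathbf{I}_p\right)^{-1}\beta_0=\frac{c_n^\top\beta_0}{z\,(1+v(p/m,-z))}+O_P\!\left(\|c_n\|\|\beta_0\|/\sqrt m\right),
\qquad z=\frac{n\tau}{m}.
\]
The fluctuation order $1/\sqrt m$ is what this step needs, and I would derive it from a second-moment bound using the eighth moments. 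With this, $T_{1,n}=c_n^\top(a_n\mathbf{I}_p-\tau\mathbf{R}_n)\beta_0$ satisfies $\sqrt{nr_n}\,T_{1,n}/\|c_n\|=O_P(\|\beta_0\|)=o_P(1)$. This is precisely where the vanishing-signal assumption and the choice of $a_n$ enter.

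For $T_{2,n}$ I would condition additionally on $\mathbf{X}_n$. The term is then a weighted sum of the independent errors $\varepsilon_i$, $i\in\{\delta_i=1\}$, with weights $w_i=\frac1n c_n^\top\mathbf{R}_nX_i$. I would apply the Lindeberg CLT for weighted sums, which requires two inputs.
\begin{enumerate}
\item \emph{Variance.} The conditional variance is $\sigma_\varepsilon^2\, c_n^\top\mathbf{R}_n\mathbf{B}_n\mathbf{R}_n c_n/n$. Since $\mathbf{R}_n\mathbf{B}_n\mathbf{R}_n=\mathbf{R}_n-\tau\mathbf{R}_n^2$, the same deterministic equivalent, now applied to the quadratic form $c_n^\top\mathbf{R}(\cdot,z)c_n$ and to its $z$-derivative (via Cauchy integral or Vitali on a neighborhood of $z$), shows that $\frac{nr_n}{\|c_n\|^2}\cdot\frac1n c_n^\top\mathbf{R}_n\mathbf{B}_n\mathbf{R}_nc_n$ converges in probability. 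After rescaling $N_n/n\to R$ in probability, the limit is $v'(\kappa/R,-\tau/R)/(1+v)^2$. I would verify this identity algebraically using the Marchenko--Pastur self-consistent equation $z=-1/v+\kappa/(1+v)$ and its derivative.
\item \emph{Negligibility.} I need $\max_i w_i^2/\sum_i w_i^2\to0$ in probability. By the leave-one-out identity $\mathbf{R}_nX_i=\mathbf{R}_{n,-i}X_i/(1+\frac1n\delta_iX_i^\top\mathbf{R}_{n,-i}X_i)$, we have $|w_i|\le\frac1n|c_n^\top\mathbf{R}_{n,-i}X_i|$. The right-hand side is a linear form in $X_i$ independent of $\mathbf{R}_{n,-i}$, with fourth moment $O(\|c_n\|^4/n^4)$. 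A union bound via $\sum_i\mathbb{E}w_i^4$ then gives $\sum_i w_i^4=O_P(\|c_n\|^4 n^{-3})$, which together with the fourth moment of $\varepsilon$ gives the Lyapunov condition.
\end{enumerate}

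Conditional asymptotic normality with a conditional variance that converges in probability to a constant yields unconditional convergence by dominated convergence of characteristic functions. Combined with $T_{1,n}=o_P$, this proves the claim, and $\sigma^2>0$ because $v'>0$ on the negative axis.

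The main obstacle is the first step: obtaining a \emph{rate} for the anisotropic law $c_n^\top\mathbf{R}\beta_0$ that is sharp enough, namely $o(\|c_n\|/\sqrt n)$ after multiplication by $\|\beta_0\|$, uniformly over the random $N_n$, under only eighth moments. I would first try to reduce it to a variance bound $\Var(c_n^\top\mathbf{R}\beta_0)=O(\|c_n\|^2\|\beta_0\|^2/n)$ via martingale differences over columns, plus an $O(1/n)$ bias bound for the expectation against $1/(z(1+v))$. Both would be uniform over $m$ in a window $|m/n-r_n|\le n^{-1/2}\log n$.
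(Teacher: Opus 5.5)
Your proposal is correct and follows essentially the paper's proof. It uses the same split into a bias term and a noise term, and it conditions on $N_n=m$ and applies the Frobenius-rate deterministic equivalent of \cite{StarkDoernemann2026} with $\Theta_n\propto\beta_0c_n^\top$, so the remainder is $O_P(\|\beta_0\|)=o_P(1)$. It then proves a conditional Lyapunov CLT, with variance $\sigma_\varepsilon^2\|c_n\|^{-2}c_n^\top(\mathbf{R}-\tau\mathbf{R}^2)c_n$ handled by deterministic equivalents for $\mathbf{R}$ and its $z$-derivative, and the fourth-moment condition obtained from the leave-one-out bound and Rosenthal's inequality. The differences are cosmetic: you use a different window for $N_n$ and Cauchy's formula instead of the derivative version of the deterministic equivalent. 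The rate you flag as the main obstacle is exactly what the paper takes directly from \cite{StarkDoernemann2026} rather than re-deriving.
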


For the purpose of estimating $\sigma^2$ consistently, we introduce the matrix 
\begin{align*}
    \mathbf{M}_n = \frac{1}{n}\Delta_n\mathbf{X}_n\mathbf{R}(\mathbf{B}_n,\tau)\mathbf{X}_n^\top\Delta_n.
\end{align*}
\begin{theorem}\label{thm_consistent_variance_estimator_identity_cov}
    Suppose assumptions \ref{ass_sample_to_dimension_ratio} - \ref{ass_mutual_independence} are satisfied. Let $c_n\in \mathbb{R}^{p}\setminus\{\mathbf{0}_p\}$ and suppose $\Vert \beta_0\Vert = o(1)$. Then
    \begin{align*}
        \hat{\sigma}_n^2 = r_n\hat{\sigma}_{n,1}^{2} \hat{\sigma}_{n,2}^{2} \conp \sigma^2,
    \end{align*}
    where 
    \begin{align*}
        \hat{\sigma}_{n,1}^{2} = \begin{cases}
            0 & N_n = 0
            \\
            \frac{1}{n^{-1}(N_n-\tr\left(\mathbf{M}_n\right))}\frac{1}{n}\mathcal{Y}_n^\top  \left(\Delta_n- \mathbf{M}_n\right) \mathcal{Y}_n & N_n >0
        \end{cases}   = \begin{cases}
            0 & N_n = 0
            \\
            \frac{1}{N_n -\tr(\mathbf{M}_n)}\mathcal{Y}_n^\top (\Delta_n -\mathbf{M}_n)\mathcal{Y}_n & N_n >0
        \end{cases}
    \end{align*}
    and
    \begin{align*}
        \hat{\sigma}_{n,2}^{2}
        &=\frac{1}{\Vert c_n\Vert^2}c_n^\top \left[\mathbf{R}(\mathbf{B}_n, \tau)-\tau\mathbf{R}(\mathbf{B}_n,\tau)^2\right]c_n
    \end{align*}
    and $\sigma^2$ is defined in Theorem \ref{thm_main_CLT_identity_cov}.
\end{theorem}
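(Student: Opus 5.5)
The plan is to prove $\hat\sigma_{n,1}^2\conp\sigma_\varepsilon^2$ and
\[
r_n\hat\sigma_{n,2}^2\conp \frac{v'(\kappa/R,-\tau/R)}{(v(\kappa/R,-\tau/R)+1)^2}
\]
separately, and then combine them with the continuous mapping theorem. We may restrict to the event $\{N_n>0\}$ throughout, since $N_n/n\to R>0$ almost surely (strong law of large numbers for triangular Bernoulli arrays, or simply Chebyshev in probability).

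\emph{Step 1: the error-variance estimator.} Write $\mathbf{P}_n:=\Delta_n-\mathbf{M}_n$. Let $\tilde{\mathbf X}_n$ be the $N_n\times p$ matrix of selected rows. On the selected coordinates, $\mathbf{P}_n$ equals $\tau(\tfrac1n\tilde{\mathbf X}_n\tilde{\mathbf X}_n^\top+\tau\mathbf I_{N_n})^{-1}$, and it vanishes elsewhere. Hence $\mathbf{P}_n$ is symmetric with eigenvalues in $[0,1]$, and
\[
\tr(\mathbf P_n)=\sum_{j=1}^{N_n}\frac{\tau}{\lambda_j+\tau}\ \ge\ N_n\,\frac{\tau}{\Vert\mathbf B_n\Vert_2+\tau}.
\]
By (A2) and the Bai–Yin-type bound available under finite moments, $\Vert\mathbf B_n\Vert_2\le \Vert n^{-1}\mathbf X_n^\top\mathbf X_n\Vert_2=O_P(1)$. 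Therefore the denominator satisfies $N_n-\tr(\mathbf M_n)\ge c\,n$ with probability tending to one.

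Next decompose
\[
\mathcal Y_n^\top\mathbf P_n\mathcal Y_n=\mathcal E_n^\top\mathbf P_n\mathcal E_n+2\beta_0^\top\mathbf X_n^\top\mathbf P_n\mathcal E_n+\beta_0^\top\mathbf X_n^\top\mathbf P_n\mathbf X_n\beta_0 .
\]
\begin{itemize}
\item The signal term is bounded by $\Vert\mathbf X_n\beta_0\Vert^2\le n\Vert n^{-1}\mathbf X_n^\top\mathbf X_n\Vert_2\Vert\beta_0\Vert^2=o_P(n)$.
\item Conditionally on $(\mathbf X_n,\Delta_n)$, the pure-noise term has mean $\sigma_\varepsilon^2\tr(\mathbf P_n)$. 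By the standard quadratic-form variance bound using $\mathbb E\varepsilon^4<\infty$, its conditional variance is at most $C\tr(\mathbf P_n^2)\le Cn$. Hence it equals $\sigma_\varepsilon^2\tr(\mathbf P_n)+O_P(\sqrt n)$.
\item The cross term has conditional variance $\sigma_\varepsilon^2\Vert\mathbf P_n\mathbf X_n\beta_0\Vert^2=o_P(n)$, so it is also $o_P(n)$.
\end{itemize}
Dividing by $\tr(\mathbf P_n)\ge cn$ gives $\hat\sigma_{n,1}^2\conp\sigma_\varepsilon^2$.

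\emph{Step 2: the quadratic form.} The key observation is
\[
\mathbf R(\mathbf B_n,\tau)-\tau\mathbf R(\mathbf B_n,\tau)^2=\frac{d}{d\tau}\bigl[\tau\mathbf R(\mathbf B_n,\tau)\bigr].
\]
Hence $\hat\sigma_{n,2}^2=\frac{d}{d\tau}g_n(\tau)$ with $g_n(z):=z\,c_n^\top\mathbf R(\mathbf B_n,z)c_n/\Vert c_n\Vert^2$. Conditionally on $\Delta_n$, $\mathbf B_n=(N_n/n)\mathbf S_{N_n}$, where $\mathbf S_{N_n}$ is the sample covariance of $N_n$ i.i.d.\ isotropic vectors with ratio $p/N_n\to\kappa/R$. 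This gives
\[
\mathbf R(\mathbf B_n,z)=\frac{n}{N_n}\Bigl(\mathbf S_{N_n}+\frac{n}{N_n}z\,\mathbf I_p\Bigr)^{-1}.
\]
I would apply the anisotropic deterministic equivalent of \cite{StarkDoernemann2026} (isotropic case, $\Sigma=\mathbf I_p$): $c^\top(\mathbf S_{N}+w\mathbf I)^{-1}c/\Vert c\Vert^2-m(p/N,-w)\conp0$ pointwise for $w$ off the negative real axis, where $m$ is the Stieltjes transform of the Marchenko–Pastur law. Since $N_n/n\to R$ and $m$ is continuous, this yields $g_n(z)\conp g(z):=(z/R)\,m(\kappa/R,-z/R)$ for each $z>0$. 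Because $g_n$ is analytic on $\mathbb C\setminus(-\infty,0]$ and locally uniformly bounded there, I would pass to a.s.\ convergent subsequences on a countable dense set and use Vitali's theorem. This gives convergence of derivatives, $\hat\sigma_{n,2}^2\conp g'(\tau)$.

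\emph{Step 3: identification of the limit.} Use the companion relation $v(\kappa',z)=-(1-\kappa')/z+\kappa' m(\kappa',z)$, equivalently $m=-1/(z(1+v))$ for the Marchenko–Pastur law with $\kappa'=\kappa/R$ and $z=-\tau/R$. This gives $g(\tau)=1/(1+v(\kappa/R,-\tau/R))$. Differentiating in $\tau$ produces
\[
g'(\tau)=\frac{1}{R}\,\frac{v'(\kappa/R,-\tau/R)}{(1+v(\kappa/R,-\tau/R))^2},
\]
so $r_n\hat\sigma_{n,2}^2\to v'/(1+v)^2$, as required. Multiplying by the limit from Step 1 gives $\sigma^2$, consistent with Theorem~\ref{thm_main_CLT_identity_cov}.

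The main obstacle I expect is Step 2: justifying the deterministic equivalent uniformly with respect to the random subsample size $N_n$ (conditioning on $\Delta_n$ and using independence from $\mathbf X_n$), and upgrading pointwise convergence to convergence of the $\tau$-derivative. The algebraic identification in Step 3 is routine but sign-sensitive, and I would verify it via the self-consistent equation $z=-1/v+\kappa'/(1+v)$.
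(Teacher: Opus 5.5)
Your proposal is correct, but Step 2 follows a genuinely different route from the paper. The paper treats $c_n^\top\mathbf R(\mathbf B_n,\tau)c_n$ and $\tau c_n^\top\mathbf R(\mathbf B_n,\tau)^2c_n$ separately and then combines the two limits algebraically. For the first it uses the first-order deterministic equivalent (Lemma~\ref{lem_det_equi_identity_cov}). For the second it uses a separate second-order equivalent for the squared resolvent (Lemma~\ref{lem_derivative_det_equi_identity_cov}, i.e.\ Corollary 2.1 of \cite{StarkDoernemann2026}). You instead write $\mathbf R-\tau\mathbf R^2=\frac{d}{d\tau}[\tau\mathbf R]$, so you need only the first-order equivalent at real positive arguments, and you pass to the derivative via Vitali's theorem. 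This is legitimate because $|g_n(z)|\le |z|/\operatorname{dist}(z,(-\infty,0])$ holds deterministically. The diagonal sub-subsequence argument then converts pointwise convergence in probability into convergence in probability of $g_n'(\tau)$. Your route is more self-contained and shows the limit $R^{-1}v'/(1+v)^2$ directly as the $\tau$-derivative of $1/(1+v)$; the paper's route is shorter given the cited corollary.

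The random-sample-size issue you flag is resolved in the paper as you anticipate. You should write it out, since the cited equivalents are stated for deterministic sample sizes:
\begin{enumerate}
\item condition on $N_n=m$ and discard $\{N_n\notin I_n\}$ via Hoeffding;
\item bound the probability by $\sup_{m\in I_n}$;
\item argue by contradiction along a worst-case deterministic sequence $m_k$ with $m_k/n_k\to R$, to which the equivalent applies.
\end{enumerate}

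In Step 1, your joint treatment of $\mathbf P_n$ is slightly cleaner than the paper's, which handles $\mathcal E_n^\top\Delta_n\mathcal E_n$ and $\mathcal E_n^\top\mathbf M_n\mathcal E_n$ separately and then swaps $r_n$ for $N_n/n$. The paper avoids operator-norm bounds in two places. For the denominator it applies Jensen's inequality with $\tr(\mathbf B_n)/n$ instead of $\Vert\mathbf B_n\Vert_2$. For the signal term it uses the exact identity $\mathbf X_n^\top\mathbf P_n\mathbf X_n=n\tau\mathbf B_n\mathbf R(\mathbf B_n,\tau)\preceq n\tau\mathbf I_p$. If you keep your operator-norm argument, cite Lata{\l}a's bound (Lemma~\ref{lem_sample_cov_op_norm_non_identical_distributed}) rather than Bai--Yin. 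The entries here form a triangular array and need not be identically distributed.
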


\begin{remark}
    Since
    \begin{align*}
        \mathbf{M}_n = \frac{1}{n}\Delta_n \mathbf{X}_n\mathbf{X}_n^\top \Delta_n \left(\frac{1}{n}\Delta_n \mathbf{X}_n \mathbf{X}_n^\top \Delta_n + \tau \mathbf{I}_n\right)^{-1},
    \end{align*}
    it follows on $\{N_n >0\}$ for every $n$ that
    \begin{align}\label{eq_TL1}
        \frac{N_n - \tr(\mathbf{M}_n)}{n} = \frac{1}{n}\sum_{i=1}^{N_n} \frac{\tau}{\lambda_i +\tau} >0,
    \end{align}
    where $\lambda_1, \ldots, \lambda_{N_n}$ denote the eigenvalues of $\frac{1}{n}\Delta_n \mathbf{X}_n\mathbf{X}_n^\top \Delta_n$ corresponding to the selected observations. Consequently, $\hat{\sigma}_{n,1}^2$ is well defined on $\{N_n > 0\}$.
\end{remark}

For the test problem 
\begin{align}\label{eq_hypothesis}
    \mathsf{H}_{0,n}: c_n^\top \beta_0 = \theta_{n} \quad \text{vs.} \quad \mathsf{H}_{1,n}: c_n^\top \beta_0 \neq \theta_{n}
\end{align}
we define the test statistic 
\begin{align}\label{eq_test_statistic}
    T_n =\begin{cases}
        0 & \hat{\sigma}_n^2 =0
        \\
        \frac{1}{\hat{\sigma}_n}\frac{\sqrt{r_nn}}{\Vert c_n\Vert}\left(c_n^\top \hat{\beta}-\left(1-a_n(N_n)\right)\theta_n\right) & \hat{\sigma}_n^2 >0
    \end{cases} .
\end{align}

\begin{theorem}\label{thm_test_normal}
    Suppose assumptions \ref{ass_sample_to_dimension_ratio} - \ref{ass_mutual_independence} are satisfied and let $c_n\in \mathbb{R}^{p}\setminus \{\mathbf{0}_p\}$. Furthermore, we assume that $\Vert \beta_0 \Vert = o(1)$ and the null hypothesis in \eqref{eq_hypothesis} holds true. Then,
    \begin{align*}
        T_n \cond \mathcal{N}(0,1).
    \end{align*}
\end{theorem}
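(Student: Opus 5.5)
This is a Slutsky argument that combines Theorem~\ref{thm_main_CLT_identity_cov} with Theorem~\ref{thm_consistent_variance_estimator_identity_cov}. Define
\[
U_n=\frac{\sqrt{r_n n}}{\Vert c_n\Vert}\, c_n^\top\bigl(\hat\beta-(1-a_n(N_n))\beta_0\bigr).
\]
Under $\mathsf{H}_{0,n}$ we have $c_n^\top\beta_0=\theta_n$. The centering in \eqref{eq_test_statistic} is therefore exactly $(1-a_n(N_n))c_n^\top\beta_0$, and on the event $\{\hat\sigma_n^2>0\}$ the statistic is $T_n=U_n/\hat\sigma_n$. Theorem~\ref{thm_main_CLT_identity_cov} gives $U_n\cond\mathcal N(0,\sigma^2)$. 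Theorem~\ref{thm_consistent_variance_estimator_identity_cov} gives $\hat\sigma_n^2\conp\sigma^2$, and both apply because $\Vert\beta_0\Vert=o(1)$ is assumed.

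Next we use $\sigma^2>0$, as stated in Theorem~\ref{thm_main_CLT_identity_cov}. Consistency then gives $\mathbb P(\hat\sigma_n^2>0)\ge \mathbb P(|\hat\sigma_n^2-\sigma^2|<\sigma^2/2)\to1$. Define $\tilde\sigma_n=\hat\sigma_n$ on $\{\hat\sigma_n^2>0\}$ and $\tilde\sigma_n=\sigma$ otherwise. By the continuous mapping theorem, $\tilde\sigma_n\conp\sigma$, so $1/\tilde\sigma_n\conp1/\sigma$. Slutsky's lemma then yields $U_n/\tilde\sigma_n\cond\mathcal N(0,1)$.

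Finally, $T_n$ and $U_n/\tilde\sigma_n$ agree on an event whose probability tends to one, so their difference tends to zero in probability. A second application of Slutsky gives $T_n\cond\mathcal N(0,1)$.

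The only delicate step is the degenerate case $\hat\sigma_n^2=0$. This case includes $N_n=0$, which has probability $(1-r_n)^n\to0$ because $R>0$. It is handled entirely by the strict positivity of $\sigma^2$ together with the consistency result, so no new random matrix estimates are required.
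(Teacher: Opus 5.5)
Your proposal is correct and follows essentially the same route as the paper. The paper gives no separate proof of this theorem, but it uses exactly this Slutsky argument in the proof of Corollary~\ref{cor_test_performance}: under $\mathsf{H}_{0,n}$ the term $D_n$ in $T_n=(\sigma/\hat{\sigma}_n)(\Breve{T}_n+D_n)$ vanishes, and the degenerate event is discarded via $\mathbb{P}(\hat{\sigma}_n^2=0)\to 0$, which follows from Theorem~\ref{thm_consistent_variance_estimator_identity_cov}.
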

Based on Theorem \ref{thm_test_normal}, we propose to reject the null hypothesis $\mathsf{H}_{0,n}$, if
\begin{align*}
    \vert T_n \vert >  q_{1-\frac{\alpha}{2}},
\end{align*}
where $q_{1-{\frac{\alpha}{2}}}$ denotes the $(1-\frac{\alpha}{2})$-quantile of a standard normal distribution for $\alpha \in (0,1)$. 
%The level of the proposed test \eqref{eq_test_statistic} can be controlled asymptotically and is consistent under the alternative $\mathsf{H}_{1,n}$.
In the following corollary, we show that the test \eqref{eq_test_statistic} has asymptotic level $\alpha$ and  is consistent against sequences of alternatives satisfying the separation condition stated below.

\begin{corollary}\label{cor_test_performance}
     Suppose assumptions \ref{ass_sample_to_dimension_ratio} - \ref{ass_mutual_independence} are satisfied. Let $c_n\in \mathbb{R}^{p}\setminus \{\mathbf{0}_p\}$ and assume that $\Vert \beta_0 \Vert = o(1)$. 
     \begin{enumerate}[label=(\alph*)]
        \item Under $\mathsf{H}_{0,n}$ it holds
            \begin{align*}
                \mathbb{P}\left( \vert T_n\vert > q_{1-\frac{\alpha}{2}}\right) \to \alpha, \quad n\to \infty.
            \end{align*}
        \item Suppose that 
        \begin{align*}
            Q_n= \frac{\sqrt{n}}{\Vert c_n\Vert} \vert c_n^\top \beta_0 -\theta_n\vert \to \infty, \quad n\to \infty.
        \end{align*}
        Then, under the alternative $\mathsf{H}_{1,n}$, we have
        \begin{align*}
            \mathbb{P}\left( \vert T_n\vert > q_{1-\frac{\alpha}{2}}\right) \to 1, \quad n\to \infty.
        \end{align*}
        \item 
        Suppose that
        \begin{align*}
            \frac{\sqrt{n}}{\Vert c_n\Vert}(c_n^\top \beta_0 - \theta_n) \to d \in \mathbb{R}.
        \end{align*}
        Then, the test statistic satisfies
        \begin{align*}
            T_n \cond \mathcal{N}(\mu,1),
        \end{align*}
        where
        \begin{align*}
            \mu = \frac{\sqrt{R}}{\sigma} \frac{v\left(\frac{\kappa}{R},-\frac{\tau}{R}\right)}{v\left(\frac{\kappa}{R},-\frac{\tau}{R}\right)+1}d.
        \end{align*}
     \end{enumerate}
\end{corollary}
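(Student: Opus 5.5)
The plan is to reduce all three parts to Theorem~\ref{thm_main_CLT_identity_cov} and Theorem~\ref{thm_consistent_variance_estimator_identity_cov} through a decomposition of $T_n$ and Slutsky's lemma. On the event $\{\hat\sigma_n^2>0\}$, whose probability tends to one because $\hat\sigma_n^2\conp\sigma^2>0$, we write
\begin{align*}
T_n=\frac{1}{\hat\sigma_n}\frac{\sqrt{r_nn}}{\Vert c_n\Vert}c_n^\top\left(\hat\beta-(1-a_n(N_n))\beta_0\right)+\frac{1}{\hat\sigma_n}\frac{\sqrt{r_nn}}{\Vert c_n\Vert}(1-a_n(N_n))\left(c_n^\top\beta_0-\theta_n\right)=:T_{n,1}+T_{n,2}.
\end{align*}
By Theorem~\ref{thm_main_CLT_identity_cov} and Theorem~\ref{thm_consistent_variance_estimator_identity_cov}, $T_{n,1}\cond\mathcal N(0,1)$.

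\textbf{Step 1: the limit of $1-a_n(N_n)$.} Since $N_n\sim\mathrm{Bin}(n,r_n)$, the law of large numbers gives $N_n/n\conp R>0$, and $\mathbb P(N_n=0)\to0$. Hence $p/N_n\conp\kappa/R$ and $-n\tau/N_n\conp-\tau/R$. The map $(\kappa,z)\mapsto v(\kappa,z)$ is continuous on $(0,\infty)\times(-\infty,0)$ by the explicit formula \eqref{eq_def_v_function_identity_cov}; the discriminant $(1-\kappa+z)^2-4z$ is positive for $z<0$. So the continuous mapping theorem yields
\[
1-a_n(N_n)\conp \frac{v(\kappa/R,-\tau/R)}{v(\kappa/R,-\tau/R)+1}=:\gamma .
\]
For $z<0$, $v(\kappa,z)$ is the Stieltjes transform of a measure on $[0,\infty)$ that is not the zero measure, so $v>0$ and therefore $\gamma\in(0,1)$.

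\textbf{Step 2: the three parts.}
\begin{itemize}
\item Part (a): under $\mathsf H_{0,n}$ we have $T_{n,2}=0$, so Theorem~\ref{thm_test_normal} gives $T_n\cond\mathcal N(0,1)$. Continuity of the normal distribution function at $\pm q_{1-\alpha/2}$ then gives the limit $\alpha$.
\item Part (c): $\sqrt{r_n}\to\sqrt R$, $\hat\sigma_n\conp\sigma$ and $1-a_n(N_n)\conp\gamma$. Hence $T_{n,2}\conp \sqrt R\,\gamma d/\sigma=\mu$, and Slutsky's lemma gives $T_n\cond\mathcal N(\mu,1)$.
\item Part (b): $|T_{n,2}|=\hat\sigma_n^{-1}\sqrt{r_n}\,(1-a_n(N_n))\,Q_n$. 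The prefactor converges in probability to $\sqrt R\gamma/\sigma>0$ and $Q_n\to\infty$, so $|T_{n,2}|\conp\infty$. Since $T_{n,1}=O_{\mathbb P}(1)$, we get $\mathbb P(|T_n|>q_{1-\alpha/2})\ge\mathbb P(|T_{n,2}|-|T_{n,1}|>q_{1-\alpha/2})\to1$.
\end{itemize}

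\textbf{Main obstacle.} Step 1 is the only delicate point. It needs continuity of $v$ jointly in both arguments, together with strict positivity of $\gamma$. Strict positivity is essential in part (b), since a degenerate $\gamma=0$ would destroy consistency. Both properties follow from the closed form of $v$ on the negative real axis. We should also check that Theorems~\ref{thm_main_CLT_identity_cov} and~\ref{thm_consistent_variance_estimator_identity_cov} are stated without reference to the null hypothesis, so that they apply under alternatives. They only assume $\Vert\beta_0\Vert=o(1)$, so this holds.
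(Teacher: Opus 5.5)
Your proposal is correct and follows essentially the same route as the paper. The paper writes $T_n=\frac{\sigma}{\hat\sigma_n}(\Breve{T}_n+D_n)$ (your $T_{n,1}+T_{n,2}$), controls the first term via Theorems~\ref{thm_main_CLT_identity_cov} and~\ref{thm_consistent_variance_estimator_identity_cov}, and handles the drift term using joint continuity of $v$ (Lemma~\ref{lem_sequence_stieltjes_transform_arguments_continuous}) together with $v(\kappa/R,-\tau/R)>0$; the paper leaves part (a) implicit, and your reduction to Theorem~\ref{thm_test_normal} is exactly what is needed there.
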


\subsection{Arbitrary \texorpdfstring{$\bfSigma_n$}{Covariance Matrix}}\label{sec_arbitrary}

In this section, $v(\kappa,z)$ is defined as the solution of
\begin{align}\label{eq_def_general_v}
    v(\kappa,z) = - \left(z-\kappa \int_{0}^{\infty} \frac{t}{1+t v(\kappa,z)}\, dH(t)\right)^{-1}, \quad z\in \mathbb{C}\setminus \mathbb{R}_{\geq 0}
\end{align}
with the Stieltjes-conditions $\Im(v(\kappa,z))\Im(z) >0$, if $z\in \mathbb{C}\setminus\mathbb{R}$ and $v(\kappa, z)>0$, when $z < 0$. Furthermore, $v^\prime(\kappa,z)$ denotes the derivative of $v$ with respect to $z$. Notably, the following central limit theorem for the subsampled ridge estimator does not require the vanishing-signal assumption $\|\beta_0 \| =o(1).$

\begin{theorem}\label{thm_CLT_arbitrary_cov}
    Suppose assumptions \ref{ass_sample_to_dimension_ratio} - \ref{ass_mutual_independence} are satisfied and let $c_n\in \mathbb{R}^p \setminus \{\mathbf{0}_p\}$.
    Assume that either 
    \begin{align}\label{eq_ass_c_norm}
        0 < \inf_{n\in \mathbb{N}} \Vert c_n\Vert \leq \sup_{n\in \mathbb{N}} \Vert c_n\Vert < \infty
    \end{align}
    and
    \begin{align}\label{eq_ass_c_quadratic_form}
        0<\inf_{n\in \mathbb{N}} c_n^\top \bfSigma_n c_n,
    \end{align}
    or that there exists a constant $m>0$ such that
    \begin{align}\label{eq_eigenvalues_bounded_from_below}
        0 < m \leq \lambda_{\min}\left(\bfSigma_n\right)
    \end{align}
    is satisfied, where $\lambda_{\min}(\bfSigma_n)$ denotes the smallest eigenvalue of $\bfSigma_n$. Then
    \begin{align*}
        T_n \cond \mathcal{N}(0,R),
    \end{align*}
    where
    \begin{align*}
        T_n= \begin{cases}
            \frac{1}{\sigma_n}\frac{\sqrt{r_n n}}{\Vert c_n\Vert} c_n^\top \left(\hat{\beta}-\left(\mathbf{I}_p - \tau \mathbf{R}(\mathbf{B}_n, \tau)\right)\beta_0\right) & \sigma_n^2 >0
            \\
            0 & \sigma_n^2=0
        \end{cases},
    \end{align*}
    and
    \begin{align*}
        \sigma_n^2 = \frac{\sigma_{\varepsilon}^2}{\Vert c_n\Vert^2} c_n^\top \left(\mathbf{R}(\mathbf{B}_n, \tau)-\tau \mathbf{R}(\mathbf{B}_n, \tau)^2\right)c_n.
    \end{align*}
\end{theorem}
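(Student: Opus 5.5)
By \eqref{eq_rep_ridge_estimator}, the centered quantity is exactly the noise term:
\begin{align*}
c_n^\top\bigl(\hat\beta-(\mathbf I_p-\tau\mathbf R(\mathbf B_n,\tau))\beta_0\bigr)=\frac1n\sum_{i=1}^n \delta_{i,n}\varepsilon_{i,n}\, w_i,\qquad w_i:=c_n^\top\mathbf R(\mathbf B_n,\tau)X_{i,n}.
\end{align*}
This is why no vanishing-signal assumption is needed. The plan is to condition on $\mathcal F_n=\sigma(\mathbf X_n,\Delta_n)$. Under \ref{ass_mutual_independence} the errors are i.i.d.\ and independent of $\mathcal F_n$, so conditionally the sum is a weighted sum of independent centered variables. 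Its conditional variance is
\begin{align*}
\frac{\sigma_\varepsilon^2}{n^2}\sum_i\delta_{i,n}w_i^2=\frac{\sigma_\varepsilon^2}{n}\,c_n^\top\mathbf R\mathbf B_n\mathbf R c_n=\frac{\sigma_\varepsilon^2}{n}\,c_n^\top(\mathbf R-\tau\mathbf R^2)c_n ,
\end{align*}
using $\mathbf B_n\mathbf R=\mathbf I_p-\tau\mathbf R$. Hence, on $\{\sigma_n^2>0\}$, $T_n$ conditionally has variance exactly $r_n\to R$. This explains the limit $\mathcal N(0,R)$, since the normalization uses $\sigma_n$ without the factor $r_n$. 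The proof then reduces to a conditional Lindeberg argument together with showing $\mathbb P(\sigma_n^2=0)\to0$.

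\textbf{Step 1 (nondegeneracy).} We need $\liminf \sigma_n^2>0$ in probability. Write $\mathbf R-\tau\mathbf R^2=\mathbf R^{1/2}\mathbf B_n\mathbf R^{1/2}$. Since $\|\mathbf B_n\|_2=O_P(1)$ (Bai--Yin type bound under \ref{ass_random_vectors_MP}, \ref{ass_spectral_norm_cov_matrix}), $\mathbf R\succeq(\|\mathbf B_n\|_2+\tau)^{-1}\mathbf I_p$ is bounded below. It therefore suffices to lower bound $c_n^\top\mathbf R\mathbf B_n\mathbf Rc_n/\|c_n\|^2$.
\begin{itemize}
\item Under \eqref{eq_eigenvalues_bounded_from_below}: if $N_n\ge p$ with a well-conditioned subsample, $\lambda_{\min}(\mathbf B_n)$ is bounded below. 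In general, however, $\mathbf B_n$ can be singular, so I would not rely on this.
\item Instead, I would use the deterministic equivalents of \cite{StarkDoernemann2026}. They give $c_n^\top\mathbf R c_n/\|c_n\|^2$ and $c_n^\top\mathbf R^2c_n/\|c_n\|^2$ approximated by $\|c_n\|^{-2}c_n^\top(\tilde v\bfSigma_n+\tau\mathbf I)^{-1}c_n$ and its $\tau$-derivative. Here $\tilde v$ is built from $v(\kappa_n/r_n,-\tau/r_n)$, and the effective ratio is $p/N_n\approx\kappa/R$.
\item The difference is $-\frac{d}{d\tau}\bigl[\tau\, c^\top\mathbf R c\bigr]$, whose equivalent is $\|c_n\|^{-2}c_n^\top \bfSigma_n^{1/2}(\cdots)\bfSigma_n^{1/2}c_n$ with a positive factor. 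This is bounded below by a multiple of $c_n^\top\bfSigma_nc_n/\|c_n\|^2$, which is positive under either \eqref{eq_ass_c_norm}--\eqref{eq_ass_c_quadratic_form} or \eqref{eq_eigenvalues_bounded_from_below}.
\item A direct alternative: $c^\top\mathbf R\mathbf B_n\mathbf Rc\ge \frac1n\sum_i\delta_i (c^\top\mathbf R X_i)^2$, and leave-one-out shows $\frac1n\sum_i\delta_i w_i^2\approx r_n\, c^\top\mathbf R_{(i)}\bfSigma_n\mathbf R_{(i)}c/(1+\delta_i n^{-1}X_i^\top\mathbf R_{(i)}X_i)^2$.
\end{itemize}
I expect this positivity, uniform in $n$ and valid for general $H$, to be the main obstacle.

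\textbf{Step 2 (Lindeberg).} Conditionally on $\mathcal F_n$, use Lyapunov with fourth moments via \ref{ass_random_errors}. It suffices to show
\begin{align*}
\frac{\sum_i\delta_iw_i^4}{\bigl(\sum_i\delta_iw_i^2\bigr)^2}\conp0.
\end{align*}
By Step 1 the denominator is of order $n^2\|c_n\|^4$, so we need $\sum_i w_i^4=o_P(n^2\|c_n\|^4)$. Write $w_i=c^\top\mathbf R_{(i)}X_i/(1+\delta_i n^{-1}X_i^\top\mathbf R_{(i)}X_i)$ by Sherman--Morrison, where $\mathbf R_{(i)}$ is the leave-one-out resolvent. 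Since $\mathbf R_{(i)}$ is independent of $X_i$ and the denominator is at least $1$, $\mathbb E w_i^4\le \mathbb E (c^\top\mathbf R_{(i)}\bfSigma_n^{1/2}Z_i)^4\le C M_4\|c\|^4\|\bfSigma_n\|_2^2\tau^{-4}$. Hence $\mathbb E\sum_i w_i^4=O(n\|c\|^4)$, which is more than enough; bounded fourth moments from \ref{ass_random_vectors_MP} already suffice.

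\textbf{Step 3 (conclusion).} The conditional characteristic function of $T_n$ converges to $e^{-Rt^2/2}$ in probability on events of probability tending to one. Dominated convergence then gives the unconditional limit, and the event $\{\sigma_n^2=0\}$ is negligible by Step 1.
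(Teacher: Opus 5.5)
Your architecture matches the paper's. The centering leaves only the noise term, the conditional variance is exactly $r_n$ on $\{\sigma_n^2>0\}$, and the conditional Lyapunov condition follows from the leave-one-out bound $|c_n^\top\mathbf R(\mathbf B_n,\tau)X_i|\le|c_n^\top\mathbf R(\mathbf B_n^{(i)},\tau)X_i|$ plus Rosenthal. The real divergence is Step 1, which you leave as the ``main obstacle'' and propose to attack with general-$\bfSigma_n$ deterministic equivalents for $\mathbf R$ and $\mathbf R^2$. That route is viable in principle: up to $o_{\mathbb P}(1)$ in $\|c_n\|^{-2}c_n^\top(\cdot)c_n$, the equivalent of $\mathbf R-\tau\mathbf R^2$ is a positive multiple of $v'\bfSigma_n(v\bfSigma_n+\mathbf I_p)^{-2}\succeq v'(v\|\bfSigma_n\|_2+1)^{-2}\bfSigma_n$. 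But it needs several things you leave unverified:
\begin{itemize}
\item the general-covariance versions of Theorem 2.1 and Corollary 2.1 of \cite{StarkDoernemann2026};
\item conditioning on $\{N_n=m\}$ with a uniform-in-$m$ subsequence argument;
\item assumption \ref{ass_ESD_conv};
\item a uniform lower bound on $v'$.
\end{itemize}

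The paper's argument is elementary, and you were one line from it. You note $\mathbf R\succeq(\|\mathbf B_n\|_2+\tau)^{-1}\mathbf I_p$, but then ``reduce'' the problem to lower bounding the same quantity. Since $\mathbf R$ and $\mathbf B_n$ commute, $\mathbf R-\tau\mathbf R^2=\mathbf R\mathbf B_n\mathbf R=\mathbf B_n^{1/2}\mathbf R^2\mathbf B_n^{1/2}\succeq(\|\mathbf B_n\|_2+\tau)^{-2}\mathbf B_n$. Hence
\[
\sigma_n^2\ge \frac{\sigma_\varepsilon^2}{(\|\mathbf B_n\|_2+\tau)^2}\,\frac{c_n^\top\mathbf B_nc_n}{\|c_n\|^2},
\]
and $\{\sigma_n^2=0\}=\{c_n^\top\mathbf B_nc_n=0\}$. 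The quantity $\|c_n\|^{-2}c_n^\top\mathbf B_nc_n=\frac1n\sum_i\delta_i(c_n^\top X_i)^2/\|c_n\|^2$ is an average of independent terms. Its mean is $r_nc_n^\top\bfSigma_nc_n/\|c_n\|^2$, bounded below under either hypothesis. Its variance is $O(1/n)$ by a fourth-moment bound on $Z_1^\top\bfSigma_n^{1/2}c_n$. Chebyshev together with $\|\mathbf B_n\|_2\le\|\bfSigmahat_n\|_2=O_{\mathbb P}(1)$ (Lata{\l}a) gives both $1/\sigma_n^2=O_{\mathbb P}(1)$ and $\mathbb P(\sigma_n^2=0)=O(1/n)$. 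This uses only fourth moments, with no deterministic equivalents and no \ref{ass_ESD_conv}. Your route would additionally identify a deterministic equivalent of $\sigma_n^2$, which the theorem does not need.

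Two algebraic slips to fix. First, $\mathbf R-\tau\mathbf R^2$ equals $\mathbf R\mathbf B_n\mathbf R$, not $\mathbf R^{1/2}\mathbf B_n\mathbf R^{1/2}$; the latter is $\mathbf I_p-\tau\mathbf R$. Second, $\mathbf R-\tau\mathbf R^2=+\frac{d}{d\tau}(\tau\mathbf R)$, not minus.
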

In the proof of Theorem \ref{thm_consistent_variance_estimator_identity_cov}, we show that $\hat{\sigma}_{n,1}^2 \conp \sigma_{\varepsilon}^2$. We emphasize that the proof does not rely on the assumption $\bfSigma_n=\mathbf{I}_p$ and remains valid for general covariance matrices $\bfSigma_n$ satisfying our assumptions. Consequently, we obtain the following result. 
\begin{corollary}\label{cor_variance_estimator_arbitrary_cov}
Suppose that assumptions \ref{ass_sample_to_dimension_ratio} - \ref{ass_mutual_independence} hold. Let $c_n\in \mathbb{R}^p\setminus\{ \mathbf{0}_p\}$ satisfy either \eqref{eq_ass_c_norm} and \eqref{eq_ass_c_quadratic_form}, or the assumption \ref{eq_eigenvalues_bounded_from_below}. Furthermore, assume that $\Vert\beta_0\Vert = o(1)$. Then,
\begin{align*}
    \frac{\tilde{\sigma}_n^2}{\sigma_n^2} \conp 1,
\end{align*}
where the ratio is defined as $1$ on $\{\sigma_n^2=0\}$ and
\begin{align*}
    \tilde{\sigma}_n^2 = \frac{\hat{\sigma}_{n,1}^2}{\Vert c_n\Vert^2} c_n^\top \left(\mathbf{R}(\mathbf{B}_n,\tau)- \tau \mathbf{R}(\mathbf{B}_n, \tau)^2\right)c_n
\end{align*}
and $\hat{\sigma}_{n,1}^2$ is defined in Theorem \ref{thm_consistent_variance_estimator_identity_cov}.
\end{corollary}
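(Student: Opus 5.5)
Since $\tilde{\sigma}_n^2 = \hat{\sigma}_{n,1}^2\,\sigma_n^2/\sigma_{\varepsilon}^2$ by construction, we have $\tilde{\sigma}_n^2/\sigma_n^2 = \hat{\sigma}_{n,1}^2/\sigma_{\varepsilon}^2$ on $\{\sigma_n^2>0\}$. On $\{\sigma_n^2=0\}$ the ratio is defined as $1$. The corollary therefore reduces to showing $\hat{\sigma}_{n,1}^2 \conp \sigma_{\varepsilon}^2$ for general $\bfSigma_n$, together with the fact that the event $\{\sigma_n^2 = 0\}$ is irrelevant for the limit.

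Step one is the nondegeneracy of $\sigma_n^2$. The matrix $\mathbf{R}(\mathbf{B}_n,\tau)-\tau\mathbf{R}(\mathbf{B}_n,\tau)^2 = \mathbf{B}_n\mathbf{R}(\mathbf{B}_n,\tau)^2$ is nonnegative definite. Under either \eqref{eq_ass_c_norm}--\eqref{eq_ass_c_quadratic_form} or \eqref{eq_eigenvalues_bounded_from_below}, the proof of Theorem~\ref{thm_CLT_arbitrary_cov} bounds $\sigma_n^2$ away from zero in probability, or at least controls $\{\sigma_n^2=0\}$. In any case, since the ratio equals $1$ on that event, it can only help. For instance, $\{N_n=0\}$ has probability $(1-r_n)^n\to 0$.

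Step two is the consistency $\hat{\sigma}_{n,1}^2\conp\sigma_{\varepsilon}^2$. The paper states that the argument in the proof of Theorem~\ref{thm_consistent_variance_estimator_identity_cov} does not use $\bfSigma_n=\mathbf{I}_p$, so I would invoke it directly. For completeness, here is how it runs. Write $\mathcal{Y}_n=\mathbf{X}_n\beta_0+\mathcal{E}_n$ and note that
\[
\Delta_n-\mathbf{M}_n=\tau\,\Delta_n\Big(\tfrac1n\Delta_n\mathbf{X}_n\mathbf{X}_n^\top\Delta_n+\tau\mathbf{I}_n\Big)^{-1}\Delta_n .
\]
This matrix is nonnegative definite with spectral norm at most $1$.

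The pure-noise term $\mathcal{E}_n^\top(\Delta_n-\mathbf{M}_n)\mathcal{E}_n$ is handled by conditioning on $(\mathbf{X}_n,\Delta_n)$. Its conditional mean is $\sigma_{\varepsilon}^2\tr(\Delta_n-\mathbf{M}_n)=\sigma_{\varepsilon}^2(N_n-\tr\mathbf{M}_n)$. Its conditional variance is bounded by a constant times $\|\Delta_n-\mathbf{M}_n\|_F^2\le N_n$, using $\mathbb{E}\varepsilon^4<\infty$. Dividing by $N_n-\tr(\mathbf{M}_n)$ leaves a fluctuation of order $\sqrt{N_n}/(N_n-\tr\mathbf{M}_n)$.

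By \eqref{eq_TL1} together with $\lambda_i\le\|\mathbf{B}_n\|_2=O_P(1)$ (which follows from \ref{ass_spectral_norm_cov_matrix} and the bounded moments), the denominator $(N_n-\tr\mathbf{M}_n)/n$ is bounded below in probability by $c\,N_n/n\to cR>0$. Hence the fluctuation is $O_P(n^{-1/2})$.

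The cross term $2\beta_0^\top\mathbf{X}_n^\top(\Delta_n-\mathbf{M}_n)\mathcal{E}_n$ has conditional mean zero and conditional variance at most $C\|\mathbf{X}_n\beta_0\|^2$. The signal term satisfies $\beta_0^\top\mathbf{X}_n^\top(\Delta_n-\mathbf{M}_n)\mathbf{X}_n\beta_0\le\|\mathbf{X}_n\beta_0\|^2$, and $\mathbb{E}\|\mathbf{X}_n\beta_0\|^2 = n\,\beta_0^\top\bfSigma_n\beta_0\le Cn\|\beta_0\|^2=o(n)$. After division by a denominator of order $n$, both the cross and signal terms are $o_P(1)$. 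This is the only place where $\|\beta_0\|=o(1)$ and \ref{ass_spectral_norm_cov_matrix} enter, and none of these steps uses isotropy.

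The main obstacle I expect is the lower bound on the denominator, specifically the uniform control of $\|\mathbf{B}_n\|_2$ under only eighth moments. I would avoid it by using Jensen instead. The function $x\mapsto\tau/(x+\tau)$ is convex, and $\frac1{N_n}\sum_i\lambda_i=\frac1{N_n}\tr(\tfrac1n\mathbf{X}_n^\top\Delta_n\mathbf{X}_n)=O_P(1)$ by a simple law of large numbers with bounded $\tr\bfSigma_n/n$. Jensen then gives $\frac1{N_n}\sum_i \tau/(\lambda_i+\tau)\ge \tau/(\bar\lambda+\tau)$, which is bounded below in probability. Combining steps one and two with the continuous mapping theorem yields $\tilde{\sigma}_n^2/\sigma_n^2\conp1$.
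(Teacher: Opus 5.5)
Your proposal is correct and follows the paper's route. On $\{\sigma_n^2>0\}$ the ratio equals $\hat{\sigma}_{n,1}^2/\sigma_{\varepsilon}^2$, and the proof of \eqref{eq_consistent_noise_variance} never uses $\bfSigma_n=\mathbf{I}_p$: it consists of the noise/signal/cross decomposition together with the Jensen lower bound on $(N_n-\tr(\mathbf{M}_n))/n$ from Lemma~\ref{lem_demoninator_var_est_away_from_zero}. Your observation that $|\tilde{\sigma}_n^2/\sigma_n^2-1|\le|\hat{\sigma}_{n,1}^2/\sigma_{\varepsilon}^2-1|$ holds pointwise under the stated convention is also right. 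It shows that the paper's appeal to \eqref{eq_variance_zero_negligble}, and with it the conditions imposed on $c_n$, is not actually needed for this corollary.
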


\section{Proofs}
Since $r_n\to R$, there exists a $n_0\in \mathbb{N}$ such that
$r_n > R/2$ for all $n \geq n_0$, which implies that
\begin{align*}
    \mathbb{P}(N_n=0)= \left(1-r_n\right)^n \leq \left(1-\frac{R}{2}\right)^n \to 0.
\end{align*}
Therefore, it suffices to restrict our attention to the event $\{N_n>0\}$ throughout the following proofs when proving convergence in probability or in distribution. 
\newline
For notational convenience, we omit the row index $n$ of the triangular arrays and write $X_i=X_{i,n}$, $\varepsilon_i=\varepsilon_{i,n}$ and $\delta_i = \delta_{i,n}$, whenever no ambiguity arises.

\subsection{Proof of Theorem \ref{thm_main_CLT_identity_cov}}\label{sec_proofs_identity}

We start with  some preparations. First, defining  
$\eta_n = \sqrt{|r_n-R|} + 2\sqrt{\frac{\log n}{n}}$ and noting that  $r_n\to R$, it follows that
\begin{align}\label{eq_def_eta_n}
    \eta_n \to 0, \quad n \eta_n^2 \to \infty, \quad \sqrt{n}\exp\left(-\frac{n\eta_n^2}{2}\right)\to 0, \quad \vert r_n-R\vert=o(\eta_n).
\end{align}
Now, we define
\begin{align*}
    I_n = \{ m \in \{1,\ldots,n\} \mid \vert m - nR \vert \leq n\eta_n\}.
\end{align*}
Then, we obtain
\begin{align*}
    \sup_{m \in I_n} \left\vert \frac{m}{n}-R\right\vert \leq \eta_n \to 0.
\end{align*}
For sufficiently large $n$, we have $\vert r_n- R\vert < \frac{\eta_n}{2}$. Therefore, Hoeffding's inequality implies that
\begin{align}\label{eq_neglible_event}
    \mathbb{P}\left(N_n \notin I_n\right) = \mathbb{P}\left(\left\vert N_n-nR\right\vert > n\eta_n\right) \leq \mathbb{P}\left(\vert N_n - n r_n\vert > \frac{n\eta_n}{2}\right) \leq 2\exp\left(-\frac{n\eta_n^2}{2}\right) \to 0.
\end{align}
With this preparation we are able to prove Theorem \ref{thm_main_CLT_identity_cov}. The approximation \eqref{eq_neglible_event} implies that
    \begin{align*}
        \frac{\sqrt{r_n n}}{\Vert c_n\Vert}c_n^\top \left(\hat{\beta}- (1-a_n(N_n))\beta_0\right) &= \frac{\sqrt{r_n n}}{\Vert c_n\Vert}c_n^\top \left(\hat{\beta}- (1-a_n(N_n))\beta_0\right)\left(\mathds{1}\{N_n \in I_n\}+\mathds{1}\{N_n\notin I_n\}\right)
        \\
        &=\frac{\sqrt{r_n n}}{\Vert c_n\Vert}c_n^\top \left(\hat{\beta}- (1-a_n(N_n))\beta_0\right)\mathds{1}\{N_n \in I_n\} + o_{\mathbb{P}}(1).
    \end{align*}
    By the representation \eqref{eq_rep_ridge_estimator}, we obtain
    \begin{align*}
        &\frac{\sqrt{r_n n}}{\Vert c_n\Vert}c_n^\top \left(\hat{\beta}- (1-a_n(N_n))\beta_0\right)\mathds{1}\{N_n\in I_n\} 
        \\
        =&\frac{\sqrt{r_n n}}{\Vert c_n\Vert}c_n^\top \Bigg( \left[a_n(N_n)\mathbf{I}_p-\tau \mathbf{R}(\mathbf{B}_n,\tau)\right]\beta_0 + \mathbf{R}(\mathbf{B}_n,\tau)\frac{1}{n}\mathbf{X}_n^\top \Delta_n \mathcal{E}_n\Bigg)\mathds{1}\{N_n\in I_n\}
        \\
        =& V_{n,1} + V_{n,2}\mathds{1}\{N_n\in I_n\} ,
    \end{align*}
    where
    \begin{align} \label{eq:def_V_n1}
        V_{n,1} = \frac{\sqrt{r_n n}}{\Vert c_n \Vert} c_n^\top\left[a_n(N_n)\mathbf{I}_p-\tau \mathbf{R}(\mathbf{B}_n,\tau)\right]\beta_0 \mathds{1}\{N_n\in I_n\}
    \end{align}
    and
    \begin{align} \label{eq:def_V_n2}
        V_{n,2} =\frac{\sqrt{r_n n}}{\Vert c_n\Vert }c_n^\top\mathbf{R}(\mathbf{B}_n,\tau)\frac{1}{n}\mathbf{X}_n^\top \Delta_n \mathcal{E}_n.
    \end{align}
We will show below that 
\begin{align}
    V_{n,1} &  \conp 0, \label{eq_bias_term}
    \\
    V_{n,2} &  \cond \mathcal{N}(0,\sigma^2). \label{eq_noise_term} 
\end{align}

From these results the assertion follows.
\medskip

\textit{Proof of \eqref{eq_bias_term}.}
We need to show that
\begin{align}
        &\frac{\sqrt{r_n n}}{\Vert c_n \Vert}\left\vert c_n^\top \left[\tau\mathbf{R}\left(\frac{N_n}{n}\hat{\mathbf{\Gamma}}_n,\tau\right)-\frac{1}{v_n\left(\frac{p}{N_n}, -\frac{n}{N_n}\tau\right) +1}\mathbf{I}_p\right]\beta_0\right\vert\mathds{1}\{N_n \in I_n \} \conp 0,\label{eq_det_equiv_bias_term_high_prob_event_identity_cov}
\end{align}
where $\hat{\mathbf{\Gamma}}_n = \frac{1}{N_n}\sum_{i: \delta_i=1}X_iX_i^\top$.
For any $\varepsilon>0$, we have
    \begin{align}
        \nonumber 
        &\mathbb{P}\left(\left\vert \frac{\sqrt{r_n n}}{\Vert c_n \Vert} c_n^\top \left[\tau\mathbf{R}\left(\frac{N_n}{n}\hat{\mathbf{\Gamma}}_n,\tau\right)-\frac{1}{v\left(\frac{p}{N_n}, -\frac{n}{N_n}\tau\right) +1}\mathbf{I}_p\right]\beta_0\mathds{1}\{N_n \in I_n \}\right\vert > \varepsilon\right)
        \\
            \nonumber 
        =&\sum_{m \in I_n}\mathbb{P}\left(\left\vert \frac{\sqrt{r_n n}}{\Vert c_n \Vert} c_n^\top \left[\tau\mathbf{R}\left(\frac{m}{n}\bfSigmahat_{m,n},\tau\right)-\frac{1}{v\left(\frac{p}{m}, -\frac{n}{m}\tau\right)+1}\mathbf{I}_p\right]\beta_0\right\vert > \varepsilon\right)\mathbb{P}\left(N_n=m\right)
        \\
        \label{det1}
        \leq&\sup_{m \in I_n}\mathbb{P}\left(\left\vert \frac{\sqrt{r_n n}}{\Vert c_n \Vert} c_n^\top \left[\tau\mathbf{R}\left(\frac{m}{n}\bfSigmahat_{m,n},\tau\right)-\frac{1}{v\left(\frac{p}{m}, -\frac{n}{m}\tau\right)+1}\mathbf{I}_p\right]\beta_0\right\vert > \varepsilon\right),
    \end{align}
    where we used that
    \begin{align*}
        \hat{\mathbf{\Gamma}}_n \mid \{ N_n=m \} \sim \bfSigmahat_{m,n}
    \end{align*}
    with
    \begin{align*}
        \bfSigmahat_{m,n} = \frac{1}{m}\sum_{i=1}^{m} X_{i,n} X_{i,n}^\top
    \end{align*}
    because $X_{1,n}, \ldots, X_{n,n}$ are i.i.d.. Consequently, \eqref{eq_det_equiv_bias_term_high_prob_event_identity_cov} follows, if the right hand side of \eqref{det1} converges to $0$.
    Suppose that this is not the case.
    Then there exists a $\delta > 0$ and a subsequence $n_k$ such that
    \begin{align*}
        \sup_{m\in I_{n_k}}\mathbb{P}\left(\frac{\sqrt{r_{n_k}n_k}}{\Vert c_{n_k} \Vert}\left\vert c_{n_k}^\top \left[\tau\mathbf{R}\left(\frac{m}{n_k}\bfSigmahat_{m,n_k},\tau\right)-\frac{1}{v\left(\frac{p}{m}, -\frac{n_k}{m}\tau\right)+1}\mathbf{I}_p\right]\beta_0\right\vert>\varepsilon\right) > \delta.
    \end{align*}
    This implies that there exists a $m_k\in I_{n_k}$ such that
    \begin{align}\label{eq_contraposition_det_equiv_bias_term_identity_cov}
        \mathbb{P}\left(\frac{\sqrt{r_{n_k}n_k}}{\Vert c_{n_k} \Vert}\left\vert c_{n_k}^\top \left[\tau\mathbf{R}\left(\frac{m_k}{n_k}\bfSigmahat_{m_k,n_k},\tau\right)-\frac{1}{v\left(\frac{p}{m_k}, -\frac{n_k}{m_k}\tau\right)+1}\mathbf{I}_p\right]\beta_0\right\vert>\varepsilon\right) > \delta.
    \end{align}
    Because $m_k\in I_{n_k}$ for all $k\in \mathbb{N}$, we obtain
    \begin{align*}
        \left\vert \frac{m_k}{n_k}-R\right\vert \leq \eta_{n_k} \to 0,
    \end{align*}
    which implies 
    \begin{align*}
        \frac{p}{m_k} = \frac{p/n_k}{m_k/n_k}\to \frac{\kappa}{R}.
    \end{align*}
    Consequently, there exists a compact set $K\subset \mathbb{C}\setminus \mathbb{R}_{\geq 0}$ such that $\zeta_k = -\frac{n_k}{m_k}\tau \in K$ for sufficiently large $k$.
    Since 
    \begin{align*}
        \left\Vert \frac{\sqrt{r_{n_k}n_{k}}}{\Vert c_{n_k}\Vert} \tau \beta_0 c_{n_k}^\top \right\Vert_{F}= \sqrt{r_{n_k}n_{k}}\tau \Vert \beta_0\Vert,
    \end{align*}
    it follows by Lemma \ref{lem_det_equi_DS_identity_cov} that
    \begin{align*}
        & \frac{\sqrt{r_{n_k}n_k}}{\Vert c_{n_k}\Vert} \frac{m_k}{n_k} c_{n_k}^\top \left[\mathbf{R}\left(\frac{m_k}{n_k}\bfSigmahat_{m_k,n_k},\tau\right) - \frac{1}{\tau v\left(\frac{p}{m_k}, -\frac{n_k}{m_k}\tau\right)+\tau}\mathbf{I}_p\right]\beta_0 
        \\
        =&\frac{\sqrt{r_{n_k}n_k}}{\Vert c_{n_k}\Vert} c_{n_k}^\top \left[\mathbf{R}\left(\bfSigmahat_{m_k,n_k},\frac{n_k}{m_k}\tau\right)-\frac{1}{\frac{n_k}{m_k}\tau v\left(\frac{p}{m_k},-\frac{n_k}{m_k}\tau\right)+\frac{n_k}{m_k}\tau}\mathbf{I}_p\right]\beta_0
        \\
        =&O_{\mathbb{P}}\left(\sqrt{r_{n_k}}\tau \Vert \beta_0\Vert + \sqrt{\frac{r_{n_k}}{n_k}}\tau \Vert \beta_0\Vert\right) = o_{\mathbb{P}}(1),
    \end{align*}
    where we used $\Vert \beta_0\Vert=o(1)$.
    Because $m_k/n_k \to R$, we obtain 
    \begin{align*}
         \frac{\sqrt{r_{n_k}n_k}}{\Vert c_{n_k}\Vert} c_{n_k}^\top \left[\tau\mathbf{R}\left(\frac{m_k}{n_k}\bfSigmahat_{m_k,n_k},\tau\right) - \frac{1}{ v\left(\frac{p}{m_k}, -\frac{n_k}{m_k}\tau\right)+1}\mathbf{I}_p\right]\beta_0 \conp 0.
    \end{align*}
    This is a contradiction to \eqref{eq_contraposition_det_equiv_bias_term_identity_cov}, and therefore the right hand side of \eqref{det1} converges to $0$.

    \smallskip

    \textit{Proof of \eqref{eq_noise_term}.}
    It suffices to show the conditional CLT
    \begin{align*}
        \frac{\sqrt{r_n}}{\Vert c_n\Vert\sqrt{n}} c_n^\top \mathbf{R}\left(\mathbf{B}_n,\tau\right)\mathbf{X}_n^\top \Delta_n \mathcal{E}_n \mid \mathbf{X}_n, \Delta_n \cond \mathcal{N}(0, \sigma^2).
    \end{align*}
    Because $r_n\to R$, we show that
    \begin{align*}
        \frac{1}{\Vert c_n \Vert\sqrt{n}}c_n^\top \mathbf{R}\left(\mathbf{B}_n,\tau\right)\mathbf{X}_n^\top \Delta_n \mathcal{E}_n \mid \mathbf{X}_n, \Delta_n
    \end{align*}
    satisfies the (conditional) Ljapunov condition. In particular, we show
    \begin{align*}
        \frac{1}{s_n^4}\sum_{i=1}^{n}\mathbb{E}\left[\left(\frac{1}{\Vert c_n \Vert\sqrt{n}}c_n^\top\mathbf{R}\left(\mathbf{B}_n,\tau\right)X_i\delta_i\varepsilon_i \right)^4\mid \mathbf{X}_n, \Delta_n\right] \conp 0,
    \end{align*}
    where 
    \begin{align*}
        s_n^2 &= \sum_{i=1}^{n} \Var \left(\frac{1}{\Vert c_n \Vert\sqrt{n}}c_n^\top\mathbf{R}\left(\mathbf{B}_n,\tau\right)X_i\delta_i\varepsilon_i \mid \mathbf{X}_n, \Delta_n\right)= \frac{\sigma_{\varepsilon}^2}{\Vert c_n\Vert^2n} \sum_{i=1}^{n} \left(c_n^\top\mathbf{R}\left(\mathbf{B}_n,\tau\right)X_i\delta_i\right)^2
        \\
        &= \frac{\sigma_{\varepsilon}^2}{\Vert c_n\Vert^2n}\sum_{i=1}^{n}c_n^\top\mathbf{R}\left(\mathbf{B}_n,\tau\right)X_i\delta_i^2 X_i^\top\mathbf{R}\left(\mathbf{B}_n,\tau\right)c_n = \frac{\sigma_{\varepsilon}^2}{\Vert c_n\Vert^2}c_n^\top \mathbf{R}\left(\mathbf{B}_n,\tau\right) \mathbf{B}_n \mathbf{R}\left(\mathbf{B}_n,\tau\right) c_n
        \\
        &=\frac{\sigma_{\varepsilon}^2}{\Vert c_n\Vert^2} c_n^\top \left(\mathbf{R}\left(\mathbf{B}_n,\tau\right) - \tau \mathbf{R}\left(\mathbf{B}_n,\tau\right)^{2}\right)c_n = \sigma_{\varepsilon}^2\hat{\sigma}_{n,2}^{2}.
\end{align*}
In the proof of Theorem \ref{thm_consistent_variance_estimator_identity_cov}, we show that 
\begin{align*}
    \hat{\sigma}_{n,2}^2 \conp \frac{1}{R}\frac{v^\prime\left(\frac{\kappa}{R},-\frac{\tau}{R}\right)}{\left(v\left(\frac{\kappa}{R},-\frac{\tau}{R}\right)+1\right)^2} >0,
\end{align*}
which implies that $s_n^2 \conp \sigma^2/ R$. Thus, it remains to show that
\begin{align}
    \sum_{i=1}^{n}\mathbb{E}\left[\left(\frac{1}{\Vert c_n \Vert\sqrt{n}}c_n^\top\mathbf{R}\left(\mathbf{B}_n,\tau\right)X_i\delta_i\varepsilon_i \right)^4\mid \mathbf{X}_n, \Delta_n\right] \conp 0.\label{eq_cond_ljapunov_zero_id_cov}
\end{align}
    
    Recall that $\varepsilon_1, \ldots, \varepsilon_n$ are independent of $\mathbf{X}_n$ and $\Delta_n$ by assumption \ref{ass_mutual_independence}. Hence, we obtain
    \begin{align}
        \sum_{i=1}^{n}\mathbb{E}\left[\left(\frac{1}{\Vert c_n \Vert\sqrt{n}}c_n^\top\mathbf{R}\left(\mathbf{B}_n,\tau\right)X_i\delta_i\varepsilon_i \right)^4\mid \mathbf{X}_n, \Delta_n\right]
        =& \frac{1}{\Vert c_n\Vert^4}\frac{1}{n^2}\sum_{i=1}^{n}\left(c_n^\top\mathbf{R}\left(\mathbf{B}_n,\tau\right)X_i\delta_i\right)^4 \mathbb{E}\left[\varepsilon_i^4\right]\nonumber
        \\
        \lesssim& \frac{1}{\Vert c_n\Vert^4}\frac{1}{n^2}\sum_{i=1}^{n}\left(c_n^\top\mathbf{R}\left(\mathbf{B}_n,\tau\right)X_i\delta_i\right)^4 \nonumber
        \\
        \lesssim& \frac{1}{\Vert c_n\Vert^4}\frac{1}{n^2}\sum_{i=1}^{n}\left(c_n^\top\mathbf{R}\left(\mathbf{B}_n,\tau\right)X_i\right)^4,\label{eq_cond_approx_Ljapunov}
    \end{align}
    where we used assumption \ref{ass_random_errors}.
    For any $i=1, \ldots, n,$ the Sherman-Woodbury formula implies that
    \begin{align*}
        \mathbf{R}(\mathbf{B}_n, \tau) = \mathbf{R}(\mathbf{B}_n^{(i)}, \tau)- \frac{\mathbf{R}(\mathbf{B}_n^{(i)}, \tau)\frac{1}{n}X_iX_i^\top\delta_i \mathbf{R}(\mathbf{B}_n^{(i)}, \tau)}{1+ \frac{1}{n}\delta_i X_i^\top \mathbf{R}(\mathbf{B}_n^{(i)},\tau)X_i},
    \end{align*}
    where 
    \begin{align*}
        \mathbf{B}_n^{(i)} &= \mathbf{B}_n - \frac{1}{n}X_iX_i^\top \delta_i.
    \end{align*}
    By multiplying with $c_n^\top$ from left and $X_i\delta_i$ from right, we obtain
    \begin{align*}
        c_n^\top \mathbf{R}(\mathbf{B}_n,\tau)X_i\delta_i = \frac{c_n^\top \mathbf{R}(\mathbf{B}_n^{(i)},\tau)X_i\delta_i}{1+ \frac{1}{n}\delta_i X_i^\top \mathbf{R}(\mathbf{B}_n^{(i)},\tau)X_i}.
    \end{align*}
    Since $\mathbf{B}_n$ is nonnegative definite, it follows that
    \begin{align}\label{eq_Sherman_Woodbury_id_cov}
        \left\vert c_n^\top \mathbf{R}(\mathbf{B}_n,\tau) X_i \delta_i \right\vert \leq \left\vert c_n^\top \mathbf{R}(\mathbf{B}_n^{(i)},\tau) X_i \delta_i\right\vert \leq \left\vert c_n^\top \mathbf{R}(\mathbf{B}_n^{(i)},\tau) X_i \right\vert.
    \end{align}
    Because $X_i$ is independent of $\mathbf{B}_n^{(i)}$, we have $\mathbb{E}[c_n^\top \mathbf{R}(\mathbf{B}_n^{(i)},\tau)e_j X_i(j)\mid \mathbf{B}_n^{(i)}]=0$ for all $j=1, \ldots, p$, where $e_j$ is the $j$-th standard unit vector and $X_i(j)$ is the $j$-th component of $X_i$. Now, recall that $\bfSigma_n=\mathbf{I}_p$, which implies that the entries of $X_i$ are independent. Therefore, conditional on $\mathbf{B}_n^{(i)}$ the random variables $c_n^\top \mathbf{R}(\mathbf{B}_n^{(i)},\tau)e_jX_i(j)$ are independent. Consequently, the (conditional) Rosenthal inequality yields 
    \begin{align*}
        \mathbb{E}\left[\left\vert c_n^\top \mathbf{R}(\mathbf{B}_n,\tau)X_i\right\vert^4\right]&\leq\mathbb{E}\left[\left\vert c_n^\top \mathbf{R}(\mathbf{B}_n^{(i)},\tau)X_i\right\vert^4\right] = \mathbb{E}\left[\mathbb{E}\left[\left\vert\sum_{j=1}^{p}c_n^\top \mathbf{R}(\mathbf{B}_n^{(i)},\tau)e_j X_i(j)\right\vert^4\mid \mathbf{B}_{n}^{(i)}\right]\right]
        \\
        &\lesssim \mathbb{E}\left[\mathbb{E}\left[\sum_{j=1}^{p}\left\vert c_n^\top \mathbf{R}(\mathbf{B}_n^{(i)},\tau)e_j X_i(j)\right\vert^4\mid \mathbf{B}_{n}^{(i)}\right]\right] + \mathbb{E}\left[\mathbb{E}\left[\sum_{j=1}^{p} \left\vert c_n^\top \mathbf{R}(\mathbf{B}_n^{(i)},\tau)e_jX_i(j)\right\vert^2\mid \mathbf{B}_{n}^{(i)}\right]^2\right]
        \\
        &\lesssim \mathbb{E}\left[\sum_{j=1}^{p} \left\vert c_n^\top \mathbf{R}(\mathbf{B}_n^{(i)},\tau) e_j \right\vert^4 \right] + \mathbb{E}\left[\left(\sum_{j=1}^{p} \left\vert c_n^\top \mathbf{R}(\mathbf{B}_n^{(i)},\tau)e_j\right\vert^2\right)^2\right]
        \\
        &\leq \mathbb{E}\left[\left\Vert \mathbf{R}(\mathbf{B}_n^{(i)},\tau)c_n\right\Vert^4\right] + \mathbb{E}\left[\left\Vert \mathbf{R}(\mathbf{B}_n^{(i)},\tau)c_n\right\Vert^4\right]
        \\
        &\lesssim \Vert c_n \Vert^4.
    \end{align*}
    Consequently, we have
    \begin{align}
        \frac{1}{\Vert c_n \Vert^4} \frac{1}{n^2}\sum_{i=1}^{n} \mathbb{E}\left[\left(c_n^\top \mathbf{R}(\mathbf{B}_n,\tau) X_i\right)^4\right] &\leq \frac{1}{\Vert c_n\Vert^4} \frac{1}{n^2}\sum_{i=1}^{n} \mathbb{E}\left[\left(c_n^\top \mathbf{R}(\mathbf{B}_n^{(i)},\tau)X_i\right)^4\right]\nonumber
        \\
        &\lesssim \frac{1}{\Vert c_n\Vert^4}\frac{1}{n^2}\sum_{i=1}^{n} \Vert c_n\Vert^4 = \frac{1}{n} = o(1). \label{eq_expectation_vanish_Ljapunov}
    \end{align}
    Finally, by Markov's inequality with the bounds \eqref{eq_cond_approx_Ljapunov} and \eqref{eq_expectation_vanish_Ljapunov}, we conclude
    \eqref{eq_cond_ljapunov_zero_id_cov}.

\subsection{Proof of  Theorem \ref{thm_consistent_variance_estimator_identity_cov}}
We will show below that
\begin{align}
    \hat{\sigma}_{n,1}^2 &\conp \sigma_{\varepsilon}^2, \label{eq_consistent_noise_variance}
    \\
    \hat{\sigma}_{n,2}^2 & \conp \frac{\frac{1}{R}v^\prime\left(\frac{\kappa}{R},-\frac{\tau}{R}\right)}{\left(v\left(\frac{\kappa}{R}, -\frac{\tau}{R}\right)+1\right)^2} > 0. \label{eq_asymptotic_variance}
\end{align}
Combining these results with $r_n \to R$, we obtain the assertion.

\medskip

\textit{Proof of \eqref{eq_consistent_noise_variance}.}
By the definition of $\mathcal{Y}_n$, we obtain
    \begin{align*}
        \frac{1}{n}\mathcal{Y}_n^\top \left(\Delta_n - \mathbf{M}_n\right) \mathcal{Y}_n &= \frac{1}{n}\mathcal{E}_n^\top \left(\Delta_n -\mathbf{M}_n\right) \mathcal{E}_n + \frac{1}{n}\beta_0^\top \mathbf{X}_n^\top  \left(\Delta_n -\mathbf{M}_n\right) \mathbf{X}_n\beta_0
        \\
        &\phantom{=}~+\frac{2}{n}\beta_0^\top \mathbf{X}_n^\top \left(\Delta_n-\mathbf{M}_n\right) \mathcal{E}_n
        \\
        &=: Q_{n,1} + Q_{n,2} + Q_{n,3}.
    \end{align*}
    We show in several steps
    \begin{align}
        Q_{n,1} &= \left(\frac{N_n}{n}-\frac{1}{n}\tr\left(\mathbf{M}_n\right)\right) \sigma_{\varepsilon}^2 + o_{\mathbb{P}}(1),\label{eq_consistent_estimator_quad_1}
        \\
        Q_{n,2} &\conp 0,\label{eq_consistent_estimator_quad_2}
        \\
        Q_{n,3} &\conp 0.\label{eq_consistent_estimator_quad_3}
    \end{align}
    Combining these results with Lemma \ref{lem_demoninator_var_est_away_from_zero} completes the proof of the assertion.
    To show \eqref{eq_consistent_estimator_quad_1}, we first note that
    \begin{align*}
        \mathbb{E}\left[\frac{1}{n}\mathcal{E}_n^\top \Delta_n \mathcal{E}_n\right] = r_n \sigma_{\varepsilon}^2
    \end{align*}
    and
    \begin{align*}
        \Var\left(\frac{1}{n}\mathcal{E}_n^\top \Delta_n \mathcal{E}_n\right) = \frac{1}{n}\Var(\delta_{1}\varepsilon_1^2)\leq \frac{r_n}{n}\mathbb{E}[\varepsilon_1^4] = O(n^{-1}).
    \end{align*}
    This implies that 
    \begin{align}\label{eq_Q_1_n_part_1}
        \frac{1}{n}\mathcal{E}_n^\top \Delta_n\mathcal{E}_n = r_n\sigma_{\varepsilon}^2 + O_{\mathbb{P}}\left(n^{-1/2}\right).
    \end{align}
    Moreover, we have
    \begin{align*}
    \Var\left(\mathcal{E}_n^\top \mathbf{M}_n\mathcal{E}_n\mid \mathbf{X}_n, \Delta_n\right)&=\mathbb{E}\left[\left(\mathcal{E}_n^\top \mathbf{M}_n \mathcal{E}_n-\sigma_{\varepsilon}^2\tr\left(\mathbf{M}_n\right)\right)^2\mid \mathbf{X}_n, \Delta_n\right]
    \\
    &=\left(\mathbb{E}\left[\varepsilon_1^4\right]-\sigma_{\varepsilon}^4\right)\sum_{j=1}^{n}\mathbf{M}_n(j,j)^2 + 2\sigma_{\varepsilon}^4\sum_{j\neq \ell}^{n} \mathbf{M}_n(j,\ell)^2
    \\
    &\leq \left(\mathbb{E}\left[\varepsilon_1^4\right]+\sigma_{\varepsilon}^4\right) \left\Vert \mathbf{M}_n\right\Vert_F^2.
\end{align*}
Since $0\preceq \mathbf{M}_n\preceq\Delta_n\preceq I_n,$ 
we have $ \Vert \mathbf{M}_n\Vert_F^2\leq n$, which yields 
\begin{align*}
    \Var\left(\mathcal{E}_n^\top \mathbf{M}_n\mathcal{E}_n\mid X_n,\Delta_n\right)\leq n\left(\mathbb{E}\left[\varepsilon_1^4\right]+\sigma_\varepsilon^4\right).
\end{align*}
By Chebyshev's inequality, this implies
\begin{align}\label{eq_Q_1_n_part_2}
    \frac{1}{n}\mathcal{E}_n^\top \mathbf{M}_n \mathcal{E}_n = \frac{\sigma_{\varepsilon}^2}{n}\tr \left(\mathbf{M}_n\right) + O_{\mathbb{P}}(n^{-1/2}).
\end{align}

Combining \eqref{eq_Q_1_n_part_1} and \eqref{eq_Q_1_n_part_2}, we obtain
\begin{align*}
    \frac{1}{n}\mathcal{E}_n^\top \left(\Delta_n -\mathbf{M}_n\right)\mathcal{E}_n &= \left(r_n-\frac{1}{n}\tr\left(\mathbf{M}_n\right)\right)\sigma_{\varepsilon}^2 + O_{\mathbb{P}}\left(n^{-1/2}\right)
    \\
    &= \left(\frac{N_n}{n}- \frac{1}{n}\tr(\mathbf{M}_n)\right)\sigma_{\varepsilon}^2 + o_{\mathbb{P}}(1),
\end{align*}
where we used \eqref{eq_subsampled_ratio_difference_as_convergence}. 

\smallskip

By the definition of $\mathbf{M}_n$, we obtain 
    \begin{align*}
        Q_{n,2} &= \beta_0^\top \mathbf{B}_n \beta_0 - \frac{1}{n}\beta_0^\top\mathbf{X}_n^\top \mathbf{M}_n \mathbf{X}_n \beta_0
        \\
        &= \beta_0^\top \mathbf{B}_n \beta_0 - \beta_0^\top\mathbf{B}_n \left(\mathbf{I}_p -\tau \mathbf{R}(\mathbf{B}_n,\tau)\right)\beta_0
        \\
        &= \tau\beta_0^\top\mathbf{B}_n\mathbf{R}(\mathbf{B}_n,\tau)\beta_0
        \\
        &= \tau\Vert \beta_0 \Vert ^2 - \tau^2 \beta_0^\top\mathbf{R}(\mathbf{B}_n,\tau)\beta_0.
    \end{align*}
    This leads to
    \begin{align*}
        \vert Q_{n,2}\vert \leq 2\tau \Vert \beta_0\Vert^2 =o(1),
    \end{align*}
    because $\Vert \beta_0\Vert =o(1)$. Hence, \eqref{eq_consistent_estimator_quad_2} follows.

    \smallskip

    For \eqref{eq_consistent_estimator_quad_3}, we notice that
    \begin{align*}
        \mathbb{E}\left[Q_{n,3}\mid \mathbf{X}_n, \Delta_n\right] =0,
    \end{align*}
    due to the mutual independence of $\mathcal{E}_n, \mathbf{X}_n$ and $\Delta_n$ and the fact that $\mathbb{E}\left[\mathcal{E}_n\right]=\mathbf{0}_n$. 
    Furthermore, we have
    \begin{align*}
        \Var\left(Q_{n,3} \mid \mathbf{X}_n, \Delta_n\right) &= \mathbb{E}\left[Q_{n,3}^2 \mid \mathbf{X}_n, \Delta_n\right]
        \\
        &=\frac{4}{n^2}\mathbb{E}\left[\beta_0^\top \mathbf{X}_n^\top \left(\Delta_n - \mathbf{M}_n\right)\mathcal{E}_n \mathcal{E}_n^\top \left(\Delta_n -\mathbf{M}_n\right)\mathbf{X}_n\beta_0\mid \mathbf{X}_n, \Delta_n\right]
        \\
        &= \frac{4\sigma_{\varepsilon}^2}{n^2} \beta_0^\top \mathbf{X}_n^\top \left(\Delta_n -\mathbf{M}_n\right)^2 \mathbf{X}_n \beta_0
        \\
        &= \frac{4\sigma_{\varepsilon}^2}{n^2}\beta_0^\top \left(\mathbf{X}_n^\top \Delta_n \mathbf{X}_n - \mathbf{X}_n^\top \Delta_n \mathbf{M}_n \mathbf{X}_n - \mathbf{X}_n^\top \mathbf{M}_n \Delta_n \mathbf{X}_n + \mathbf{X}_n^\top \mathbf{M}_n^2 \mathbf{X}_n\right)\beta_0
        \\
        &=\frac{4\sigma_{\varepsilon}^2}{n^2}\beta_0^\top \left(\mathbf{X}_n^\top \Delta_n \mathbf{X}_n - 2\mathbf{X}_n^\top \mathbf{M}_n \mathbf{X}_n + \mathbf{X}_n^\top \mathbf{M}_n^2 \mathbf{X}_n\right)\beta_0
        \\
        &= \frac{4\sigma_{\varepsilon}^2}{n}\beta_0^\top \bigg(\mathbf{B}_n - 2\mathbf{B}_n\left(\mathbf{I}_p-\tau \mathbf{R}(\mathbf{B}_n,\tau)\right)
        \\
        &\phantom{=\frac{4\sigma_{\varepsilon}^2}{n^2}\beta_0^\top ~~~}+ \mathbf{B}_n\left(\mathbf{I}_p - 2\tau \mathbf{R}(\mathbf{B}_n,\tau)+ \tau^2 \mathbf{R}(\mathbf{B}_n,\tau)^2\right)\bigg)\beta_0
        \\
        &= \frac{4\sigma_{\varepsilon}^2}{n}\beta_0^\top \left(\tau^2\mathbf{B}_n\mathbf{R}(\mathbf{B}_n,\tau)^2\right)\beta_0
        \\
        &= \frac{4\sigma_{\varepsilon}^2}{n}\tau^2\beta_0^\top\left(\mathbf{R}(\mathbf{B}_n,\tau)-\tau \mathbf{R}(\mathbf{B}_n,\tau)^2\right)\beta_0
        \\
        &\leq \frac{4\sigma_{\varepsilon}^2}{n}\tau^2 \frac{2}{\tau}\Vert \beta_0\Vert^2 = \frac{8\sigma_{\varepsilon}^2}{n}\tau\Vert \beta_0\Vert^2 = o(1),
    \end{align*}
    where we used $\Vert \beta_0\Vert=o(1)$ in the last line. Consequently, we have shown \eqref{eq_consistent_estimator_quad_3}.

\medskip

\textit{Proof of \eqref{eq_asymptotic_variance}.}
Because of \eqref{eq_neglible_event}, it suffices to show that
\begin{align}\label{eq_asymp_variance_first_quad_form_identity_cov}
        \frac{1}{\Vert c_n \Vert^2}\left\vert c_n^\top\left[\mathbf{R}\left(\frac{N_n}{n}\hat{\mathbf{\Gamma}}_n,\tau\right)-\frac{1}{\tau v(\frac{\kappa}{R},-\frac{\tau}{R})+ \tau}\mathbf{I}_p\right]c_n\right\vert \mathds{1}\left\{N_n\in I_n\right\}\conp 0,
    \end{align}
    and
    \begin{align}\label{eq_asymp_variance_second_quad_form_identity_cov}
        \frac{1}{\Vert c_n\Vert^2} \left\vert c_n^\top \left[\tau \mathbf{R}\left(\frac{N_n}{n}\hat{\mathbf{\Gamma}}_n,\tau\right)^2- \frac{v(\frac{\kappa}{R}, -\frac{\tau}{R})-\frac{1}{R}\tau v^\prime(\frac{\kappa}{R},-\frac{\tau}{R}) +1}{\tau\left(v(\frac{\kappa}{R}, -\frac{\tau}{R})+1\right)^2}\mathbf{I}_p\right]c_n\right\vert \mathds{1}\{N_n \in I_n\} \conp 0,
    \end{align}
    where $\hat{\mathbf{\Gamma}}_n = \frac{1}{N_n}\sum_{i: \delta_i=1}X_iX_i^\top$. We start with \eqref{eq_asymp_variance_first_quad_form_identity_cov}. For any $\varepsilon>0$, we have
    \begin{align}
        &\mathbb{P}\left( \frac{1}{\Vert c_n \Vert^2}\left\vert c_n^\top\left[\mathbf{R}\left(\frac{N_n}{n}\hat{\mathbf{\Gamma}}_n,\tau\right)-\frac{1}{\tau v(\frac{\kappa}{R},-\frac{\tau}{R})+ \tau}\mathbf{I}_p\right]c_n\right\vert \mathds{1}\left\{N_n\in I_n\right\} > \varepsilon\right)\nonumber
        \\
        =& \sum_{m\in I_n}\mathbb{P}\left(\frac{1}{\Vert c_n \Vert^2}\left\vert c_n^\top \left[\mathbf{R}\left(\frac{m}{n}\bfSigmahat_{m,n},\tau\right) - \frac{1}{\tau v(\frac{\kappa}{R}, - \frac{\tau}{R})+ \tau}\mathbf{I}_p\right] c_n\right\vert > \varepsilon\right) \mathbb{P}\left(N_n=m\right)\nonumber
        \\
        \leq & \sup_{m\in I_n}\mathbb{P}\left(\frac{1}{\Vert c_n \Vert^2}\left\vert c_n^\top \left[\mathbf{R}\left(\frac{m}{n}\bfSigmahat_{m,n},\tau\right) - \frac{1}{\tau v(\frac{\kappa}{R}, - \frac{\tau}{R})+ \tau}\mathbf{I}_p\right] c_n\right\vert > \varepsilon\right), \label{det2}
    \end{align}
    where we used that
    \begin{align*}
        \hat{\mathbf{\Gamma}}_n \mid \{ N_n=m \} \sim \bfSigmahat_{m,n},
    \end{align*}
    because $X_{1,n}, \ldots, X_{n,n}$ are i.i.d.. Consequently, \eqref{eq_asymp_variance_first_quad_form_identity_cov} follows, if the right hand side of \eqref{det2} converges to $0$. Suppose that this is not the case. Then, there exists a $\delta >0$ and a subsequence     $n_k$ such that 
    \begin{align*}
        \sup_{m\in I_{n_k}}\mathbb{P}\left(\frac{1}{\Vert c_{n_k} \Vert^2}\left\vert c_{n_k}^\top \left[\mathbf{R}\left(\frac{m}{n_k}\bfSigmahat_{m,n_k},\tau\right) - \frac{1}{\tau v(\frac{\kappa}{R}, - \frac{\tau}{R})+ \tau}\mathbf{I}_p\right] c_{n_k}\right\vert > \varepsilon\right)> \delta.
    \end{align*}
    This implies that there exists a $m_k \in I_{n_k}$ such that
    \begin{align}\label{eq_contradiction_subsequence_identity_cov}
        \mathbb{P}\left(\frac{1}{\Vert c_{n_k} \Vert^2}\left\vert c_{n_k}^\top \left[\mathbf{R}\left(\frac{m_k}{n_k}\bfSigmahat_{m_k,n_k},\tau\right) - \frac{1}{\tau v(\frac{\kappa}{R}, - \frac{\tau}{R})+ \tau}\mathbf{I}_p\right] c_{n_k}\right\vert > \varepsilon\right)> \delta.
    \end{align}
    Because $m_k \in I_{n_k}$ for all $k\in \mathbb{N}$, we obtain
    \begin{align*}
        \left\vert \frac{m_k}{n_k}-R \right\vert \leq \eta_{n_k}\to 0,
    \end{align*}
    which implies
    \begin{align*}
        \frac{p}{m_k} = \frac{p/n_k}{m_k/n_k} \to \frac{\kappa}{R}.
    \end{align*}
    Consequently, there exists a compact set $K \subset \mathbb{C}\setminus \mathbb{R}_{\geq 0}$ such that $\zeta_k= -\frac{n_k}{m_k}\tau \in K$ for sufficiently large $k$.
    Since $\Vert c_{n_k}c_{n_k}^\top \Vert_{\tr}/\Vert c_{n_k}\Vert^2=1$, it follows by Lemma \ref{lem_det_equi_identity_cov} that
    \begin{align*}
        &\frac{1}{\Vert c_{n_k} \Vert^2}c_{n_k}^\top \left[\mathbf{R}\left(\bfSigmahat_{m_k,n_k}, \frac{n_k}{m_k}\tau\right) - \frac{1}{\frac{n_k}{m_k}\tau v(\frac{\kappa}{R}, - \frac{n_k}{m_k}\tau) + \frac{n_k}{m_k}\tau}\mathbf{I}_p\right]c_{n_k}
        \\
        =&\frac{1}{\Vert c_{n_k} \Vert^2}\frac{m_k}{n_k}c_{n_k}^\top \left[\mathbf{R}\left(\frac{m_k}{n_k}\bfSigmahat_{m_k,n_k}, \tau\right)- \frac{1}{\tau v(\frac{\kappa}{R},-\frac{n_k}{m_k}\tau)+ \tau}\mathbf{I}_p\right]c_{n_k} \conp 0.
    \end{align*}
    Because $m_k/n_k \to R$, we obtain
    \begin{align*}
        \frac{1}{\Vert c_{n_k} \Vert^2}c_{n_k}^\top \left[\mathbf{R}\left(\frac{m_k}{n_k}\bfSigmahat_{m_k,n_k}, \tau\right)- \frac{1}{\tau v(\frac{\kappa}{R},-\frac{n_k}{m_k}\tau)+ \tau}\mathbf{I}_p\right]c_{n_k} \conp 0.
    \end{align*}
    Recall that $v(\kappa,z)$ denotes the Stieltjes transform of the companion Marchenko-Pastur distribution $\underline{F}^{\kappa, \delta_{\{1\}}}$. Therefore, $v(\kappa,z)$ is holomorphic on $\mathbb{C}\setminus \mathbb{R}_{\geq 0}\subset\mathbb{C}\setminus \supp\left(\underline{F}^{\kappa, \delta_{\{1\}}}\right)$ in $z$. Hence, we obtain
    \begin{align*}
        \frac{1}{\Vert c_{n_k} \Vert^2}\left\vert c_{n_k}^\top \left[\left(\frac{1}{\tau v(\frac{\kappa}{R}, -\frac{n_k}{m_k}\tau)+ \tau} - \frac{1}{\tau v(\frac{\kappa}{R}, -\frac{\tau}{R})+ \tau}\right)\mathbf{I}_p\right]c_{n_k}\right\vert \leq \frac{1}{\tau} \left\vert v\left(\frac{\kappa}{R}, - \frac{\tau}{R}\right) - v\left(\frac{\kappa}{R}, - \frac{n_k}{m_k}\tau\right)\right\vert \to 0,
    \end{align*}
    which implies that 
    \begin{align*}
        \frac{1}{\Vert c_{n_k} \Vert^2}c_{n_k}^\top \left[\mathbf{R}\left(\frac{m_k}{n_k}\bfSigmahat_{m_k,n_k}, \tau\right)- \frac{1}{\tau v(\frac{\kappa}{R},-\frac{\tau}{R})+ \tau}\mathbf{I}_p\right]c_{n_k} \conp 0.
    \end{align*}
    This is a contradiction to \eqref{eq_contradiction_subsequence_identity_cov}, and therefore the right hand side of \eqref{det2} converges to $0$.
    
    \smallskip
    
    Next, we prove \eqref{eq_asymp_variance_second_quad_form_identity_cov}.
    For any $\varepsilon >0$, we have
    \begin{align}
        &\mathbb{P}\left(\frac{1}{\Vert c_n\Vert^2} \left\vert c_n^\top \left[\tau \mathbf{R}\left(\frac{N_n}{n}\hat{\mathbf{\Gamma}}_n,\tau\right)^2- \frac{v(\frac{\kappa}{R}, -\frac{\tau}{R})-\frac{1}{R}\tau v^\prime(\frac{\kappa}{R},-\frac{\tau}{R}) +1}{\tau\left(v(\frac{\kappa}{R}, -\frac{\tau}{R})+1\right)^2}\mathbf{I}_p\right]c_n\right\vert \mathds{1}\{N_n \in I_n\} > \varepsilon \right)\nonumber
        \\
        =& \sum_{m\in I_n}\mathbb{P}\left(\frac{1}{\Vert c_n\Vert^2} \left\vert c_n^\top \left[\tau \mathbf{R}\left(\frac{m}{n}\bfSigmahat_{m,n},\tau\right)^2- \frac{v(\frac{\kappa}{R}, -\frac{\tau}{R})-\frac{1}{R}\tau v^\prime(\frac{\kappa}{R},-\frac{\tau}{R}) +1}{\tau\left(v(\frac{\kappa}{R}, -\frac{\tau}{R})+1\right)^2}\mathbf{I}_p\right]c_n\right\vert  > \varepsilon \right)\mathbb{P}\left(N_n=m\right)\nonumber
        \\
        \leq& \sup_{m\in I_n} \mathbb{P}\left(\frac{1}{\Vert c_n\Vert^2} \left\vert c_n^\top \left[\tau \mathbf{R}\left(\frac{m}{n}\bfSigmahat_{m,n},\tau\right)^2- \frac{v(\frac{\kappa}{R}, -\frac{\tau}{R})-\frac{1}{R}\tau v^\prime(\frac{\kappa}{R},-\frac{\tau}{R}) +1}{\tau\left(v(\frac{\kappa}{R}, -\frac{\tau}{R})+1\right)^2}\mathbf{I}_p\right]c_n\right\vert  > \varepsilon \right),\label{det3}
    \end{align}
    where we used that
    \begin{align*}
        \hat{\mathbf{\Gamma}}_n \mid \{N_n=m\} \sim \bfSigmahat_{m,n},
    \end{align*}
    because $X_{1,n}, \ldots, X_{n,n}$ are i.i.d.. Consequently, \eqref{eq_asymp_variance_second_quad_form_identity_cov} follows, if the right hand side of \eqref{det3} converges to $0$. Suppose that this is not the case. Then, there exists a $\delta > 0$ and a subsequence $n_k$ such that
    \begin{align*}
        \sup_{m\in I_{n_k}} \mathbb{P}\left(\frac{1}{\Vert c_{n_k}\Vert^2} \left\vert c_{n_k}^\top \left[\tau \mathbf{R}\left(\frac{m}{n_k}\bfSigmahat_{m,n_k},\tau\right)^2- \frac{v(\frac{\kappa}{R}, -\frac{\tau}{R})-\frac{1}{R}\tau v^\prime(\frac{\kappa}{R},-\frac{\tau}{R}) +1}{\tau\left(v(\frac{\kappa}{R}, -\frac{\tau}{R})+1\right)^2}\mathbf{I}_p\right]c_{n_k}\right\vert  > \varepsilon \right)>\delta.
    \end{align*}
    This implies that $m_k \in I_{n_k}$ exists such that
    \begin{align}\label{eq_contradiction_subsequence_second_identity_cov}
        \mathbb{P}\left(\frac{1}{\Vert c_{n_k}\Vert^2} \left\vert c_{n_k}^\top \left[\tau \mathbf{R}\left(\frac{m_k}{n_k}\bfSigmahat_{m_k,n_k},\tau\right)^2- \frac{v(\frac{\kappa}{R}, -\frac{\tau}{R})-\frac{1}{R}\tau v^\prime(\frac{\kappa}{R},-\frac{\tau}{R}) +1}{\tau\left(v(\frac{\kappa}{R}, -\frac{\tau}{R})+1\right)^2}\mathbf{I}_p\right]c_{n_k}\right\vert  > \varepsilon \right)>\delta.
    \end{align}
        Because $m_k \in I_{n_k}$ for all $k\in \mathbb{N}$, we obtain
    \begin{align*}
        \left\vert \frac{m_k}{n_k}-R \right\vert \leq \eta_{n_k}\to 0,
    \end{align*}
    which implies
    \begin{align*}
        \frac{p}{m_k} = \frac{p/n_k}{m_k/n_k} \to \frac{\kappa}{R}.
    \end{align*}
    Consequently, there exists a compact set $K \subset \mathbb{C}\setminus \mathbb{R}_{\geq 0}$ such that $\zeta_k= -\frac{n_k}{m_k}\tau \in K$ for sufficiently large $k$. Since $\Vert c_{n_k}c_{n_k}^\top \Vert_{\tr}/\Vert c_{n_k}\Vert^2=1$, it follows by Lemma \ref{lem_derivative_det_equi_identity_cov} that
    \begin{align*}
        &\frac{1}{\Vert c_{n_k}\Vert^2} \left\vert c_{n_k}^\top \left[\mathbf{R}\left(\bfSigmahat_{m_k,n_k},\frac{n_k}{m_k}\tau\right)^2 - \frac{v(\frac{\kappa}{R}, -\frac{n_k}{m_k}\tau)-\frac{n_k}{m_k}\tau v^\prime(\frac{\kappa}{R},-\frac{n_k}{m_k}\tau) +1}{\left(\frac{n_k}{m_k}\tau\right)^2\left(v(\frac{\kappa}{R}, -\frac{n_k}{m_k}\tau)+1\right)^2}\mathbf{I}_p\right]c_{n_k} \right\vert
        \\
        =&\frac{1}{\Vert c_{n_k}\Vert^2} \frac{1}{\tau }\left(\frac{m_k}{n_k}\right)^2 \left\vert c_{n_k}^\top \left[\tau\mathbf{R}\left(\frac{m_k}{n_k}\bfSigmahat_{m_k,n_k},\tau\right)^2 - \frac{v(\frac{\kappa}{R}, -\frac{n_k}{m_k}\tau)-\frac{n_k}{m_k}\tau v^\prime(\frac{\kappa}{R},-\frac{n_k}{m_k}\tau) +1}{\tau\left(v(\frac{\kappa}{R}, -\frac{n_k}{m_k}\tau)+1\right)^2}\mathbf{I}_p\right]c_{n_k} \right\vert \conp 0.
    \end{align*}
    Since $m_k/n_k \to R$, we obtain
    \begin{align*}
        \frac{1}{\Vert c_{n_k}\Vert^2} \left\vert c_{n_k}^\top \left[\tau\mathbf{R}\left(\frac{m_k}{n_k}\bfSigmahat_{m_k,n_k},\tau\right)^2 - \frac{v(\frac{\kappa}{R}, -\frac{n_k}{m_k}\tau)-\frac{n_k}{m_k}\tau v^\prime(\frac{\kappa}{R},-\frac{n_k}{m_k}\tau) +1}{\tau\left(v(\frac{\kappa}{R}, -\frac{n_k}{m_k}\tau)+1\right)^2}\mathbf{I}_p\right]c_{n_k} \right\vert \conp 0.
    \end{align*}
    Since $v(\kappa, z)$ is holomorphic in $z\in \mathbb{C}\setminus \mathbb{R}_{\geq 0}$, it follows that
    \begin{align*}
        \frac{1}{\Vert c_{n_k}\Vert^2}\left\vert c_{n_k}^\top \left[\left(\frac{v(\frac{\kappa}{R}, -\frac{n_k}{m_k}\tau)-\frac{n_k}{m_k}\tau v^\prime(\frac{\kappa}{R},-\frac{n_k}{m_k}\tau) +1}{\tau\left(v(\frac{\kappa}{R}, -\frac{n_k}{m_k}\tau)+1\right)^2} - \frac{v(\frac{\kappa}{R}, -\frac{\tau}{R})-\frac{1}{R}\tau v^\prime(\frac{\kappa}{R},-\frac{\tau}{R}) +1}{\tau\left(v(\frac{\kappa}{R}, -\frac{\tau}{R})+1\right)^2}\right)\mathbf{I}_p\right]c_{n_k} \right\vert \conp 0.
    \end{align*}
    Consequently,
    \begin{align*}
        \frac{1}{\Vert c_{n_k}\Vert^2} \left\vert c_{n_k}^\top \left[\tau\mathbf{R}\left(\frac{m_k}{n_k}\bfSigmahat_{m_k,n_k},\tau\right)^2 - \frac{v(\frac{\kappa}{R}, -\frac{\tau}{R})-\frac{1}{R}\tau v^\prime(\frac{\kappa}{R},-\frac{\tau}{R}) +1}{\tau\left(v(\frac{\kappa}{R}, -\frac{\tau}{R})+1\right)^2}\mathbf{I}_p\right]c_{n_k} \right\vert \conp 0.
    \end{align*}
    This is a contradiction to \eqref{eq_contradiction_subsequence_second_identity_cov}, and therefore the right hand side of \eqref{det3} converges to $0$.

    \smallskip

    Finally, we show that the limit
    \begin{align*}
        \frac{\frac{1}{R}v^\prime\left(\frac{\kappa}{R},-\frac{\tau}{R}\right)}{\left(v\left(\frac{\kappa}{R}, -\frac{\tau}{R}\right)+1\right)^2} >0.
    \end{align*}
    Let $F^{\kappa,H}$ denote the limiting distribution of the empirical spectral distribution of $\bfSigmahat_n$ while $\underline{F}^{\kappa, H}$ denote the limiting distribution of the empirical spectral distribution of the companion sample covariance matrix $\underline{\bfSigmahat}_n$. In particular,
    \begin{align}\label{eq_companion_lsd}
        \underline{F}^{\kappa, H} = \begin{cases}
            \kappa F^{\kappa, H} + (1-\kappa) \delta_{\{0\}} & \kappa \leq 1
            \\
            \kappa F^{\kappa,H} - (\kappa -1) \delta_{\{ 0\}} & \kappa > 1
        \end{cases}.
    \end{align}
    Then, it follows that
    \begin{align*}
        v(\kappa, z) = \int \frac{1}{t-z} \, d\underline{F}^{\kappa,H}(t), \quad z\notin \supp(\underline{F}^{\kappa, H}),
    \end{align*}
    which leads to
    \begin{align*}
        v^\prime(\kappa, z) = \int \frac{1}{(t-z)^2} \, d\underline{F}^{\kappa,H}(t), \quad z\notin \supp(\underline{F}^{\kappa, H}).
    \end{align*}
    Consequently, we deduce that $\sigma^2 >0$.

    \subsection{Proof of Corollary \ref{cor_test_performance}}
    Note, $\{N_n=0\} \subset \{ \hat{\sigma}_n^2 =0\}$. However, Theorem \ref{thm_consistent_variance_estimator_identity_cov} implies that
    \begin{align*}
        \mathbb{P}(\hat{\sigma}_n^2=0)\to 0.
    \end{align*}
    Consequently, it suffices to restrict our attention to the event $\{\hat{\sigma}_n^2>0\} \subset \{N_n>0\}$ throughout the following proof.
    \newline
    \textit{Proof of Corollary \ref{cor_test_performance} (b).}
    By Theorem \ref{thm_main_CLT_identity_cov}, we have
    \begin{align}\label{eq_breve_tn_asymp_normal}
        \Breve{T}_n &= \frac{1}{\sigma}\frac{\sqrt{r_n n}}{\Vert c_n\Vert} c_n^\top \left(\hat{\beta}-(1-a_n(N_n))\beta_0\right) \cond \mathcal{N}(0,1).
    \end{align}
    Note that
    \begin{align}\label{eq_Tn_rep_under_alt}
        T_n = \frac{\sigma}{\hat{\sigma}_n}\left(\Breve{T}_n +D_n\right),
    \end{align}
    where 
    \begin{align*}
        D_n = \frac{1}{\sigma}\frac{\sqrt{r_n n}}{\Vert c_n\Vert}\left(1-a_n(N_n)\right)\left(c_n^\top \beta_0 -\theta_n\right).
    \end{align*}
    Since $\Breve{T}_n = O_{\mathbb{P}}(1)$ and $\sigma/\hat{\sigma}_n \conp 1$ by Theorem \ref{thm_consistent_variance_estimator_identity_cov}, it is sufficient to show
    \begin{align*}
        \vert D_n \vert \conp \infty, \quad n \to \infty
    \end{align*}
    to conclude that
    \begin{align}\label{eq_power_goal_identity_cov}
        \mathbb{P}\left(\vert T_n \vert > q_{1-\frac{\alpha}{2}}\mid \mathsf{H}_{1,n}\right) \to 1, \quad n \to \infty.
    \end{align}
    Recall that $\{\hat{\sigma}_n^2 >0\} \subset \{N_n >0\}$. Hence, we have
    \begin{align*}
        \vert D_n\vert  = \frac{\sqrt{r_n n}}{\Vert c_n\Vert}\frac{v\left(\frac{p}{N_n},-\frac{n}{N_n}\tau\right)}{v\left(\frac{p}{N_n},-\frac{n}{N_n}\tau\right)+1}\vert c_n^\top \beta_0-\theta_n\vert.
    \end{align*}
    Because $p/N_n \conp \kappa/R$ and $n/N_n \conp 1/R$, Lemma \ref{lem_sequence_stieltjes_transform_arguments_continuous} implies that
    \begin{align*}
        \frac{v\left(\frac{p}{N_n},-\frac{n}{N_n}\tau\right)}{v\left(\frac{p}{N_n},-\frac{n}{N_n}\tau\right)+1} \conp \frac{v\left(\frac{\kappa}{R},-\frac{\tau}{R}\right)}{v\left(\frac{\kappa}{R},-\frac{\tau}{R}\right)+1}.
    \end{align*}
    Recall that $v\left(\frac{\kappa}{R},z\right)>0$ for all $z\in \mathbb{R}_{<0}$. Consequently, we obtain
    \begin{align*}
        \frac{v\left(\frac{\kappa}{R},-\frac{\tau}{R}\right)}{v\left(\frac{\kappa}{R},-\frac{\tau}{R}\right)+1} > 0.
    \end{align*}
    Since we assume that $\frac{\sqrt{n}}{\Vert c_n\Vert}\vert c_n^\top \beta_0 - \theta_n\vert \to \infty$, we deduce that \eqref{eq_power_goal_identity_cov} holds.

    \medskip
    
    \textit{Proof of Corollary \ref{cor_test_performance} (c).}
    Because of the representation \eqref{eq_Tn_rep_under_alt}, Theorem \ref{thm_consistent_variance_estimator_identity_cov} and \eqref{eq_breve_tn_asymp_normal}, it suffices to show that
    \begin{align}\label{eq_Dn_converges_mu}
        D_n \conp \mu.
    \end{align}
    With analogous arguments as in the previous proof, we obtain
    \begin{align*}
       \left(1-a_n(N_n)\right) \conp \frac{v(\kappa/R,-\tau/R)}{1+v(\kappa/R,-\tau/R)}.
    \end{align*}
    Combining this result with the assumptions \begin{align*}
        \frac{\sqrt{n}}{\|c_n\|}(c_n^\top\beta_0-\theta_n) \longrightarrow d
    \end{align*}
    and $r_n \to R$, we deduce \eqref{eq_Dn_converges_mu}.

    \subsection{Proof of Theorem \ref{thm_CLT_arbitrary_cov}}
    We start with some preparations. First, we show that
    \begin{align}\label{eq_variance_zero_negligble}
    \mathbb{P}\left(\{\sigma_n^2= 0\}\right)\to 0.
    \end{align}
    \textit{Proof of \eqref{eq_variance_zero_negligble}.}
    Recall that $\mathbf{B}_n$ is positive semidefinite. Therefore, the matrix $\mathbf{R}(\mathbf{B}_n, \tau)$ commutes with $\mathbf{B}_n^{1/2}$, which implies that
    \begin{align*}
    \mathbb{P}\left(\{\sigma_n^2= 0\}\right) = \mathbb{P}\left(\frac{\sigma_{\varepsilon}^2}{\Vert c_n\Vert^2} \left\Vert \mathbf{B}_n^{1/2}\mathbf{R}(\mathbf{B}_n, \tau)c_n\right\Vert^2=0\right) = \mathbb{P}\left(\frac{1}{\Vert c_n\Vert^2} c_n^\top \mathbf{B}_n c_n = 0\right).
\end{align*}
By the independence from assumption \ref{ass_random_vectors_MP} and \ref{ass_mutual_independence}, we have
\begin{align*}
    \mathbb{E}\left[\frac{1}{\Vert c_n\Vert^2}c_n^\top \mathbf{B}_n c_n\right] = \mathbb{E}\left[\frac{1}{\Vert c_n\Vert^2}\frac{1}{n}\sum_{i=1}^{n} \delta_i c_n^\top X_iX_i^\top c_n \right]=\frac{r_n }{\Vert c_n\Vert^2} c_n^\top \bfSigma_n c_n
\end{align*}
and
\begin{align*}
    \Var\left(\frac{1}{\Vert c_n\Vert^2}c_n^\top \mathbf{B}_n c_n\right) &= \Var\left(\frac{1}{\Vert c_n\Vert^2}\frac{1}{n}\sum_{i=1}^{n} \delta_i c_n^\top X_iX_i^\top c_n \right) = \frac{1}{\Vert c_n\Vert^4}\frac{1}{n^2} \sum_{i=1}^{n}\Var\left(\delta_i c_n^\top X_i X_i^\top c_n\right)
    \\
    &\leq \frac{1}{\Vert c_n\Vert^4} \frac{1}{n} \mathbb{E} \left[(c_n^\top X_1 X_1^\top c_n)^2\right] = \frac{1}{\Vert c_n\Vert^4}\frac{1}{n}\mathbb{E}\left[(Z_1^\top \bfSigma_n^{1/2}c_n)^4\right].
\end{align*}
The uniform eighth-moment bound in \ref{ass_random_vectors_MP} and the spectral norm bound in \ref{ass_spectral_norm_cov_matrix} imply that
\begin{align*}
    \mathbb{E}\left[(Z_1^\top \bfSigma_n^{1/2}c_n)^4\right] &= \mathbb{E}\left[\sum_{j=1}^{p} Z_1(j)^4 (e_j^\top\bfSigma_n^{1/2}c_n)^4 +3\sum_{i\neq j}^{p} Z_1(i)^2Z_1(j)^2(e_i^\top\bfSigma_n^{1/2}c_n)^2(e_j^\top\bfSigma_n^{1/2}c_n)^2 \right]
    \\
    &\leq \sqrt{M_8}\sum_{j=1}^{p} (e_j^\top \bfSigma_n^{1/2}c_n)^4 +3\sum_{i\neq j}^{p}(e_i^\top \bfSigma_n^{1/2}c_n)^2(e_j^\top \bfSigma_n^{1/2}c_n)^2
    \\
    &\leq \max\{3,\sqrt{M_8}\} \Vert \bfSigma_{n}^{1/2}c_n\Vert^4 \lesssim \Vert c_n\Vert ^4.
\end{align*}
Hence, 
\begin{align*}
    \Var\left(\frac{1}{\Vert c_n\Vert^2}c_n^\top \mathbf{B}_n c_n\right)\lesssim \frac{C}{n}.
\end{align*}
If \eqref{eq_ass_c_norm} and \eqref{eq_ass_c_quadratic_form} are satisfied, we obtain
\begin{align*}
    \frac{1}{\Vert c_n\Vert^2}c_n^\top \bfSigma_n c_n \geq \frac{\inf_{n\in \mathbb{N}}c_n^\top \bfSigma_n c_n}{\sup_{n\in \mathbb{N}}\Vert c_n\Vert^2} > 0.
\end{align*}
If \eqref{eq_eigenvalues_bounded_from_below} is satisfied, then we have
\begin{align*}
    \frac{1}{\Vert c_n\Vert^2}c_n^\top \bfSigma_n c_n \geq m > 0.
\end{align*}
Therefore, there exists a constant $\gamma > 0$ such that
\begin{align}\label{eq_TL_C1}
    \mathbb{E}\left[\frac{1}{\Vert c_n\Vert^2}c_n^\top \mathbf{B}_n c_n\right] \geq \frac{R}{2}\gamma
\end{align}
for sufficiently large $n$, where we used the fact that $r_n \to R$.
Consequently, using Chebyshev's inequality yields
\begin{align*}
    \mathbb{P}\left(\{\sigma_n^2 =0\}\right)&= \mathbb{P}\left(\frac{1}{\Vert c_n\Vert^2}c_n^\top \mathbf{B}_n c_n= 0\right) 
    \\
    &\leq \mathbb{P}\left(\frac{1}{\Vert c_n\Vert^2}\left\vert  c_n^\top \mathbf{B}_n c_n - \mathbb{E}\left[c_n^\top \mathbf{B}_n c_n\right]\right\vert\geq \frac{R\gamma}{4} \right)
    \\
    &\lesssim \frac{1}{n} \to 0.
\end{align*}
As a consequence of \eqref{eq_variance_zero_negligble}, it suffices to restrict our attention to the event $\{\sigma_n^2 > 0\}$ throughout the following proof.
Therefore, we have 
\begin{align*}
    T_n = \sum_{i=1}^{n}\frac{1}{\sigma_n}\frac{\sqrt{r_n}}{\Vert c_n\Vert\sqrt{n}} c_n^\top \mathbf{R}\left(\mathbf{B}_n , \tau\right)X_i \delta_i \varepsilon_i.
\end{align*}
It suffices to show the conditional CLT
\begin{align*}
    T_n \mid \mathbf{X}_n, \Delta_n \cond \mathcal{N}(0,R).
\end{align*}
In particular, we show
\begin{align}\label{eq_conditional_Ljapunov_variance_arbitrary_cov}
    s_n^2 = \sum_{i=1}^{n}\Var \left(\frac{1}{\sigma_n}\frac{\sqrt{r_n}}{\Vert c_n\Vert\sqrt{n}} c_n^\top \mathbf{R}\left(\mathbf{B}_n , \tau\right)X_i \delta_i \varepsilon_i\mid \mathbf{X}_n, \Delta_n\right) \conp R
\end{align}
and
\begin{align}\label{eq_conditional_Ljapunov_sum_arbitrary_cov}
    \sum_{i=1}^{n}\mathbb{E}\left[\left(\frac{1}{\sigma_n}\frac{\sqrt{r_n}}{\Vert c_n\Vert\sqrt{n}} c_n^\top \mathbf{R}\left(\mathbf{B}_n , \tau\right)X_i \delta_i \varepsilon_i\right)^4\mid \mathbf{X}_n, \Delta_n\right] \conp 0.
\end{align}
\textit{Proof of \eqref{eq_conditional_Ljapunov_variance_arbitrary_cov}.}
Recall that $\varepsilon_1, \ldots, \varepsilon_n$ are independent of $\mathbf{X}_n, \Delta_n$ by assumption \ref{ass_mutual_independence} with $\mathbb{E}[\varepsilon_1^2] = \sigma_{\varepsilon}^2$. Hence, we obtain
\begin{align*}
    s_n^2&= \frac{1}{\sigma_n^2} \frac{r_n\sigma_{\varepsilon}^2}{\Vert c_n\Vert^2 n} \sum_{i=1}^{n} (c_n^\top \mathbf{R}(\mathbf{B}_n, \tau)X_i \delta_i)^2
    \\
    &= \frac{1}{\sigma_n^2}\frac{r_n\sigma_{\varepsilon}^2}{\Vert c_n\Vert^2 n} \sum_{i=1}^{n} c_n^\top \mathbf{R}(\mathbf{B}_n, \tau)X_i \delta_i X_i^\top \mathbf{R}(\mathbf{B}_n, \tau)c_n = \frac{1}{\sigma_n^2}\frac{r_n \sigma_{\varepsilon}^2}{\Vert c_n\Vert^2 }c^\top \mathbf{R}(\mathbf{B}_n, \tau) \mathbf{B}_n \mathbf{R}(\mathbf{B}_n,\tau)c_n
    \\
    &= \frac{1}{\sigma_n^2}\frac{r_n \sigma_{\varepsilon}^2}{\Vert c_n\Vert^2 } c_n^\top \left[\mathbf{R}(\mathbf{B}_n, \tau) - \tau \mathbf{R}(\mathbf{B}_n, \tau)^2\right]c_n = r_n \to R.
\end{align*}

\smallskip

\textit{Proof of \eqref{eq_conditional_Ljapunov_sum_arbitrary_cov}.}
By the assumptions \ref{ass_random_errors} and \ref{ass_mutual_independence} it follows that
\begin{align}
    \sum_{i=1}^{n}\mathbb{E}\left[\left(\frac{1}{\sigma_n}\frac{\sqrt{r_n}}{\Vert c_n\Vert\sqrt{n}} c_n^\top \mathbf{R}\left(\mathbf{B}_n , \tau\right)X_i \delta_i \varepsilon_i\right)^4\mid \mathbf{X}_n, \Delta_n\right] &=\frac{1}{\sigma_n^4}\frac{r_n^2}{\Vert c_n\Vert^4 n^2}\sum_{i=1}^{n} \left(c_n^\top \mathbf{R}(\mathbf{B}_n, \tau) X_i \delta_i\right)^4 \mathbb{E}\left[\varepsilon_i^4\right]\nonumber
    \\
    &\lesssim \frac{1}{\sigma_n^4}\frac{r_n^2}{\Vert c_n\Vert^4 n^2}\sum_{i=1}^{n} \left(c_n^\top \mathbf{R}(\mathbf{B}_n, \tau) X_i\right)^4.\label{eq_TL_B1}
\end{align}
We will show that
\begin{align}\label{eq_rosenthal_arbitrary_cov}
    \frac{1}{\Vert c_n\Vert^4 n^2}\sum_{i=1}^{n} \left(c_n^\top \mathbf{R}(\mathbf{B}_n,\tau)X_i\right)^4 = O_{\mathbb{P}}(n^{-1})
\end{align}
and
\begin{align}\label{eq_inv_asymp_var_is_bounded}
    \frac{1}{\sigma_n^4} = O_{\mathbb{P}}(1).
\end{align}
Applying these results to \eqref{eq_TL_B1} yields \eqref{eq_conditional_Ljapunov_sum_arbitrary_cov}.

\smallskip

\textit{Proof of \eqref{eq_rosenthal_arbitrary_cov}.}
With similar arguments as in \eqref{eq_Sherman_Woodbury_id_cov}, we obtain
\begin{align*}
    \mathbb{E}\left[\left\vert c_n^\top \mathbf{R}(\mathbf{B}_n, \tau)X_i\right\vert^4\right] &\leq \mathbb{E}\left[\left\vert c_n^\top \mathbf{R}(\mathbf{B}_n^{(i)},\tau)X_i\right\vert^4\right], \quad i=1,\ldots,n.
\end{align*}
Consequently, for $i=1,\ldots,n$ we have
\begin{align*}
    \mathbb{E}\left[\left\vert c_n^\top \mathbf{R}(\mathbf{B}_n, \tau)X_i\right\vert^4\right] &\leq \mathbb{E}\left[\left\vert c_n^\top \mathbf{R}(\mathbf{B}_n^{(i)},\tau)X_i\right\vert^4\right]=\mathbb{E}\left[\left\vert c_n^\top \mathbf{R}(\mathbf{B}_n^{(i)}, \tau)\bfSigma_n^{1/2}Z_i\right\vert^4\right] 
    \\
    &=\mathbb{E}\left[\mathbb{E}\left[\left\vert\sum_{j=1}^{p} c_n^\top \mathbf{R}(\mathbf{B}_n^{(i)},\tau)\bfSigma_n^{1/2}e_jZ_i(j)\right\vert^4\mid \mathbf{B}_n^{(i)}\right]\right]
    \\
    &\lesssim \mathbb{E}\left[\mathbb{E}\left[ \sum_{j=1}^{p} \left\vert c_n^\top \mathbf{R}(\mathbf{B}_n^{(i)},\tau)\bfSigma_n^{1/2}e_jZ_i(j)\right\vert^4\mid \mathbf{B}_n^{(i)}\right]\right]
    \\
    & ~~~+ \mathbb{E}\left[\mathbb{E}\left[\sum_{j=1}^{p} \vert c_n^\top \mathbf{R}(\mathbf{B}_n^{(i)},\tau)\bfSigma_n^{1/2}e_j Z_i(j)\vert^2\mid \mathbf{B}_n^{(i)}\right]^2\right]
    \\
    &\lesssim \mathbb{E}\left[\sum_{j=1}^{p} \left\vert c_n^\top \mathbf{R}(\mathbf{B}_n^{(i)},\tau)\bfSigma_n^{1/2}e_j \right\vert^4\right] +\mathbb{E}\left[\left(\sum_{j=1}^{p} \left\vert c_n^\top \mathbf{R}(\mathbf{B}_n^{(i)},\tau)\bfSigma_n^{1/2}e_j \right\vert^2\right)^2\right]
    \\
    &\leq \mathbb{E}\left[\left\Vert \bfSigma_n^{1/2}\mathbf{R}(\mathbf{B}_n^{(i)}, \tau) c_n
    \right\Vert^4\right] +\mathbb{E}\left[\left\Vert \bfSigma_n^{1/2}\mathbf{R}(\mathbf{B}_n^{(i)}, \tau) c_n
    \right\Vert^4\right]
    \\
    &\lesssim \Vert c_n\Vert^4,
\end{align*}
where the last inequality follows from assumption \ref{ass_spectral_norm_cov_matrix} together with the bound \(\|\mathbf R(\mathbf B_n^{(i)},\tau)\|_2\le\tau^{-1}\).
Hence, we obtain
\begin{align}\label{eq_TL_B2}
    \mathbb{E}\left[\frac{1}{\Vert c_n\Vert^4 n^2} \sum_{i=1}^{n} \left(c_n^\top \mathbf{R}(\mathbf{B}_n, \tau)X_i\right)^4\right] \lesssim \mathbb{E}\left[\frac{1}{n}\right]=O(n^{-1}),
\end{align}
which shows \eqref{eq_rosenthal_arbitrary_cov}.

\smallskip

\textit{Proof of \eqref{eq_inv_asymp_var_is_bounded}.}
Note, by the definition of $\sigma_n^2$, we have the following representation
\begin{align*}
    \sigma_n^2 = \frac{\sigma_{\varepsilon}^2}{\Vert c_n\Vert^2}c_n^\top \mathbf{B}_n \mathbf{R}(\mathbf{B}_n, \tau)^2 c_n.
\end{align*}
Since $\mathbf{B}_n$ is nonnegative definite, we have
\begin{align*}
    \frac{1}{(\Vert \mathbf{B}_n\Vert_2 + \tau)^2}\mathbf{B}_n\preceq \mathbf{B}_n \mathbf{R}\left(\mathbf{B}_n, \tau\right)^2,
\end{align*}
which implies that
\begin{align*}
    \frac{\sigma_{\varepsilon}^2}{\Vert c_n\Vert^2}\frac{1}{(\Vert \mathbf{B}_n\Vert_2 + \tau)^2}c_n^\top \mathbf{B}_n c \leq \sigma_n^2. 
\end{align*}
Recall that $\{\sigma_n^2 > 0\} = \{ \frac{1}{\Vert c_n\Vert^2}c_n^\top \mathbf{B}_n c_n > 0\}$. Hence, it follows that
\begin{align*}
    \frac{1}{\sigma_n^2} \leq \frac{\Vert c_n\Vert^2 \left(\Vert \mathbf{B}_n \Vert_2 + \tau\right)^2}{\sigma_{\varepsilon}^2 c_n^\top \mathbf{B}_nc_n}.
\end{align*}
Now, define $E_n=\{\frac{1}{\Vert c_n\Vert^2} c_n^\top \mathbf{B}_n c_n\geq \frac{R\gamma}{4}\}\subset \{\sigma_n^2 > 0\}$, where $\gamma$ is defined in \eqref{eq_TL_C1}.
\begin{align}\label{eq_TL_B4}
    \lim_{K\to \infty}\limsup_{n\to \infty}\mathbb{P}\left(\left\{\frac{1}{\sigma_n^2}>K\right\}\right) \leq \lim_{K\to \infty}\limsup_{n\to \infty} \left(\mathbb{P}(E_n^c) + \mathbb{P}\left(\frac{(\Vert \mathbf{B}_n\Vert_2 + \tau)^2}{\sigma_{\varepsilon}^2}\frac{4}{R \gamma}>K\right)\right).
\end{align}
We will show that the right hand side of \eqref{eq_TL_B4} converges to $0$.
Because of \eqref{eq_TL_C1}, it follows that
\begin{align*}
    \mathbb{P}\left(E_n^c\right)=\mathbb{P}\left(\frac{1}{\Vert c_n\Vert^2}c^\top \mathbf{B}_n c_n < \frac{R \gamma}{4}\right) &\leq \mathbb{P}\left(\frac{1}{\Vert c_n\Vert^2}\left\vert c_n^\top \mathbf{B}_n c_n - \mathbb{E}\left[c_n^\top \mathbf{B}_n c_n\right]\right\vert \geq \frac{R \gamma}{4}\right) 
    \\
    &\lesssim \Var \left(\frac{1}{\Vert c_n\Vert^2} c_n^\top \mathbf{B}_n c_n\right)\lesssim \frac{1}{n}\to 0.
\end{align*}
Since Lemma \ref{lem_sample_cov_op_norm_non_identical_distributed} yields $\Vert\mathbf{B}_n\Vert_2 \leq \Vert \bfSigmahat_n\Vert_2=O_{\mathbb{P}}(1)$, it follows that
\begin{align*}
    \lim_{K\to \infty}\limsup_{n\to \infty}\mathbb{P}\left(\frac{(\Vert \mathbf{B}_n\Vert_2 + \tau)^2}{\sigma_{\varepsilon}^2}\frac{4}{R \gamma}>K\right) = 0.
\end{align*}
Therefore, the right hand side of \eqref{eq_TL_B4} converges to $0$, and completes the proof of \eqref{eq_inv_asymp_var_is_bounded}.
\subsection{Proof of Corollary \ref{cor_variance_estimator_arbitrary_cov}}
As a consequence of \eqref{eq_variance_zero_negligble}, it suffices to restrict our attention to the event $\{\sigma_n^2>0\}$ throughout the following proof. Therefore, the definitions of $\sigma_n^2$ and $\tilde{\sigma}_n^2$ yield
\begin{align*}
    \frac{\tilde{\sigma}_n^2}{\sigma_n^2} = \frac{\hat{\sigma}_{n,1}^2}{\sigma_{\varepsilon}^2}.
\end{align*}
Since \eqref{eq_consistent_noise_variance} remains valid for general covariance matrices $\bfSigma_n$ satisfying our assumptions and $\sigma_{\varepsilon}^2 >0$, it follows that
\begin{align*}
    \frac{\hat{\sigma}_{n,1}^2}{\sigma_{\varepsilon}^2} \conp 1,
\end{align*}
which proves the assertion.
\section{Auxiliary results}

In this section, we state several auxiliary results, which are required for the proofs of our main statements. 

\begin{lemma}\label{lem_demoninator_var_est_away_from_zero}
    Suppose that the assumptions \ref{ass_sample_to_dimension_ratio} - \ref{ass_mutual_independence} are satisfied. Let $N_n= \sum_{i=1}^{n} \delta_{i,n}$. Then, we have
    \begin{align*}
        \liminf_{n\to \infty} \frac{N_n-\tr(\mathbf{M}_n)}{n} >0
    \end{align*}
    almost surely.
\end{lemma}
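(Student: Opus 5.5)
By the identity \eqref{eq_TL1}, on $\{N_n>0\}$ we have
\[
\frac{N_n-\tr(\mathbf{M}_n)}{n}=\frac1n\sum_{i=1}^{N_n}\frac{\tau}{\lambda_i+\tau},
\]
where $\lambda_1,\ldots,\lambda_{N_n}$ are the eigenvalues of $\frac1n\Delta_n\mathbf{X}_n\mathbf{X}_n^\top\Delta_n$ restricted to the selected rows. Each summand is bounded below by $\tau/(\lambda_{\max}+\tau)$, where $\lambda_{\max}$ is the largest of these eigenvalues. The nonzero eigenvalues of this matrix coincide with those of $\mathbf{B}_n$, so $\lambda_{\max}=\Vert\mathbf{B}_n\Vert_2$. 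This gives the deterministic bound
\[
\frac{N_n-\tr(\mathbf{M}_n)}{n}\;\ge\;\frac{N_n}{n}\cdot\frac{\tau}{\Vert \mathbf{B}_n\Vert_2+\tau}.
\]
The plan is therefore to prove two almost sure statements: $\liminf N_n/n>0$ and $\limsup \Vert\mathbf{B}_n\Vert_2<\infty$.

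For the first, Hoeffding's inequality gives $\mathbb{P}(|N_n/n-r_n|>\epsilon)\le 2e^{-2n\epsilon^2}$, which is summable in $n$. Borel--Cantelli then yields $N_n/n-r_n\to0$ almost surely, and hence $N_n/n\to R>0$ almost surely. This is presumably the fact \eqref{eq_subsampled_ratio_difference_as_convergence} cited earlier. In particular $N_n>0$ eventually, so the identity \eqref{eq_TL1} applies for all large $n$.

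For the second, note that $\mathbf{B}_n\preceq \frac1n\mathbf{X}_n^\top\mathbf{X}_n=\bfSigmahat_n$, so $\Vert\mathbf{B}_n\Vert_2\le\Vert\bfSigmahat_n\Vert_2\le\Vert\bfSigma_n\Vert_2\,\Vert\frac1n\mathbf{Z}_n^\top\mathbf{Z}_n\Vert_2$. By \ref{ass_spectral_norm_cov_matrix} it then suffices to show $\limsup\Vert\frac1n\mathbf{Z}_n^\top\mathbf{Z}_n\Vert_2<\infty$ almost surely. This is the main obstacle. The paper's Lemma \ref{lem_sample_cov_op_norm_non_identical_distributed} is only quoted as giving $O_{\mathbb{P}}(1)$, whereas here we need an almost sure statement, and the entries form a triangular array with only eighth moments.

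I would first try the Bai--Yin / Yin--Bai--Krishnaiah type result, $\Vert\frac1n\mathbf{Z}_n^\top\mathbf{Z}_n\Vert_2\to(1+\sqrt{\kappa})^2$ almost surely. Its truncation argument works for triangular arrays of independent standardized entries under a uniform bound on moments of order greater than $4$. Our uniform eighth-moment bound suffices, since truncating at $\eta_n\sqrt n$ costs only summable probability: $\sum_n np\,\mathbb{P}(|Z|>\eta_n\sqrt n)\lesssim\sum_n n^2 (\eta_n^8 n^4)^{-1}<\infty$ for slowly decaying $\eta_n$.

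If one wants to avoid this machinery, there is a fallback that is weaker in form but sufficient for how the lemma is used in the proof of Theorem \ref{thm_consistent_variance_estimator_identity_cov}, which only needs the ratio bounded away from zero with probability tending to one. Use $\Vert\mathbf{B}_n\Vert_2=O_{\mathbb{P}}(1)$ from Lemma \ref{lem_sample_cov_op_norm_non_identical_distributed} together with $N_n/n\conp R$. These give a constant $c>0$ with $\mathbb{P}\bigl((N_n-\tr(\mathbf{M}_n))/n\ge c\bigr)\to1$.

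In summary, the almost sure version combines the two almost sure bounds through the displayed deterministic inequality, giving
\[
\liminf_{n\to\infty}\frac{N_n-\tr(\mathbf{M}_n)}{n}\;\ge\;\frac{R\tau}{\bigl(\sup_n\Vert\bfSigma_n\Vert_2\bigr)(1+\sqrt{\kappa})^2+\tau}\;>\;0\qquad\text{almost surely.}
\]
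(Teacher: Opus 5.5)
Your proof is correct, but at the decisive step it takes a heavier route than the paper. Both arguments start from \eqref{eq_TL1} and obtain $N_n/n\to R$ almost surely from Hoeffding and Borel--Cantelli; they differ in how the eigenvalues are controlled.

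You bound each summand by $\tau/(\lambda_{\max}+\tau)$. This reduces the lemma to an almost sure bound on $\Vert\mathbf{B}_n\Vert_2$, which is exactly the obstacle you identified. The paper avoids it by applying Jensen's inequality to the convex map $x\mapsto\tau/(x+\tau)$, which replaces $\lambda_{\max}$ by the mean eigenvalue $\tr(\mathbf{B}_n)/N_n$:
\[
\frac{N_n-\tr(\mathbf{M}_n)}{n}\ \ge\ \frac{\tau\,(N_n/n)^2}{\tau N_n/n+\frac1n\tr(\mathbf{B}_n)} .
\]
Now $\frac1n\tr(\mathbf{B}_n)\le n^{-2}\sum_i\Vert X_{i,n}\Vert^2$ is a sum of independent terms with $\Var(\Vert X_{i,n}\Vert^2)=O(p)$. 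Chebyshev gives summable tail probabilities of order $p/n^3$, so $\limsup_n\frac1n\tr(\mathbf{B}_n)\le\kappa\sup_n\Vert\bfSigma_n\Vert_2$ almost surely. This needs only fourth moments and no random matrix theory.

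Your route buys a stronger by-product: almost sure boundedness of $\Vert\mathbf{B}_n\Vert_2$, i.e.\ an a.s.\ upgrade of Lemma \ref{lem_sample_cov_op_norm_non_identical_distributed}. The price is that you must invoke a triangular-array, non-identically distributed version of the Bai--Yin bound, since the classical Yin--Bai--Krishnaiah theorem concerns a single i.i.d.\ double array. A suitable source is the tail bound $\mathbb{P}(\lambda_{\max}\ge(1+\sqrt{\kappa})^2+x)=o(n^{-\ell})$ for truncated, independent entries in Bai and Silverstein's monograph, together with recentering and rescaling after truncation.

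Your summability check for truncation at $\eta_n\sqrt n$ under eighth moments is right. Your parenthetical remark that moments of order greater than $4$ suffice is not: for triangular arrays, the union bound $\sum_n np\,\mathbb{P}(|Z|>\eta_n\sqrt n)$ is summable only under a uniform moment bound of order greater than $6$. So it is really the eighth-moment assumption that makes this step work.

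Your fallback, as you note, yields only an in-probability statement. That is weaker than the lemma as stated, though enough for its use in Theorem \ref{thm_consistent_variance_estimator_identity_cov}.
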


\begin{proof}
    Note, by Hoeffding's inequality and Borel-Cantelli it follows that
    \begin{align}\label{eq_subsampled_ratio_difference_as_convergence}
        \frac{N_n}{n} - r_n \conas 0.
    \end{align}
    This leads to
    \begin{align}\label{eq_subsampled_ratio_as_convergence}
        \frac{N_n}{n}\conas R,
    \end{align}
    since $r_n \to R$. Consequently, we restrict our attention to the event $\{N_n > 0\}$ throughout the remaining proof.
    Applying Jensen's inequality with $f(x)=\tau/(x+\tau)$ in \eqref{eq_TL1}, we obtain
    \begin{align}\label{eq_TL1a}
        \frac{N_n - \tr(\mathbf{M}_n)}{n} = \frac{1}{n}\sum_{i=1}^{N_n} \frac{\tau}{\lambda_i +\tau} \geq \frac{N_n}{n} \frac{\tau}{\tau + \frac{1}{N_n} \sum_{i=1}^{N_n}\lambda_i} = \frac{\tau \left(\frac{N_n}{n}\right)^2}{\tau \frac{N_n}{n}+ \frac{1}{n}\tr(\mathbf{B}_n)}.
    \end{align}
    
     Therefore, it remains to investigate $1/n \tr(\mathbf{B}_n)$. Since
    \begin{align*}
        \frac{1}{n} \tr\left(\mathbf{B}_n\right) = \frac{1}{n^2}\sum_{i=1}^{n} \delta_{i,n} \Vert X_{i,n}\Vert^2 \leq \frac{1}{n^2}\sum_{i=1}^{n} \Vert X_{i,n}\Vert^2,
    \end{align*}
    we will show that
    \begin{align}\label{eq_TL2}
        \limsup_{n\to \infty}\frac{1}{n^2} \sum_{i=1}^{n} \Vert X_{i,n} \Vert^2 \leq \limsup_{n\to \infty} \frac{\tr(\bfSigma_n)}{n}
    \end{align}
    almost surely holds. This leads to
    \begin{align*}
        \limsup_{n\to \infty} \frac{1}{n}\tr(\mathbf{B}_n) \leq \limsup_{n\to \infty} \frac{\tr(\bfSigma_n)}{n} \leq \kappa \sup_{n\in \mathbb{N}} \Vert \bfSigma_n\Vert_2
    \end{align*}
    almost surely. Finally, using \eqref{eq_subsampled_ratio_as_convergence} and \eqref{eq_TL2} in \eqref{eq_TL1a} yields that
    \begin{align*}
        \liminf_{n\to \infty} \frac{N_n-\tr(\mathbf{M}_n)}{n} \geq \frac{\tau R^2}{\tau R + \kappa \sup_{n\in \mathbb{N}}\Vert \bfSigma_n\Vert_2} > 0
    \end{align*}
    almost surely holds. 
    
    \smallskip

    \textit{Proof of \eqref{eq_TL2}.}
    By the representation $X_{i,n} = \bfSigma_n^{1/2} Z_{i,n}$, we obtain
    \begin{align*}
        \mathbb{E}\left[\Vert X_{i,n}\Vert^2\right] = \mathbb{E}\left[Z_{i,n}^\top \bfSigma_n Z_{i,n}\right] = \tr(\bfSigma_n).
    \end{align*}
    Hence,
    \begin{align*}
        \Var\left(\Vert X_{i,n} \Vert^2\right) &= \mathbb{E}\left[\left(Z_{i,n}^\top \bfSigma_n Z_{i,n} - \tr(\bfSigma_n)\right)^2\right] 
        \\
        &= \mathbb{E}\left[\left(\sum_{j=1}^{p} \bfSigma_n(j,j)(Z_{i,n}(j)^2-1) + \sum_{j\neq k}^{p} \bfSigma_n(j,k) Z_{i,n}(j)Z_{i,n}(k)\right)^2\right].
    \end{align*}
    By the independence of the entries of $Z_{i,n}$ and the fact that the entries are centered, we obtain
    \begin{align*}
        \mathbb{E}\left[\left(\sum_{j=1}^{p}\bfSigma_n(j,j)(Z_{i,n}(j)^2-1)\right)^2\right] = \sum_{j=1}^{p} \bfSigma_n(j,j)^2 \left(\mathbb{E}[Z_{i,n}(j)^4]-1\right)
    \end{align*}
    and
    \begin{align*}
        \mathbb{E}\left[\left(\sum_{j\neq k}^{p} \bfSigma_n(j,k)Z_{i,n}(j)Z_{i,n}(k)\right)^2\right] =\sum_{j\neq k}^{p} \left(\bfSigma_n(j,k)^2 + \bfSigma_n(j,k)\bfSigma_n(k,j)\right) = 2\sum_{j\neq k}^{p} \bfSigma_n(j,k)^2 
    \end{align*}
    and
    \begin{align*}
        \mathbb{E}\left[\sum_{j=1}^{p}\sum_{k\neq \ell}^{p}\bfSigma_n(j,j)\bfSigma_n(k,\ell)(Z_{i,n}(j)^2-1)Z_{i,n}(k)Z_{i,n}(\ell)\right] = 0.
    \end{align*}
    Therefore, for $i=1,\ldots, n$, we have
    \begin{align*}
        \Var(\Vert X_{i,n}\Vert^2) &= \sum_{j=1}^{p} \bfSigma_n(j,j)^2 \left(\mathbb{E}[Z_{i,n}(j)^4]-1\right) + 2 \sum_{j\neq k}^{p} \bfSigma_n(j,k)^2
        \\
        & \lesssim \Vert \bfSigma_n\Vert_F^2 \leq p \Vert \bfSigma_n\Vert_2^2 = O(p).
    \end{align*}
    By Markov's inequality we obtain
    \begin{align*}
        \mathbb{P}\left(\left\vert \frac{1}{n^2}\sum_{i=1}^{n} \Vert X_{i,n} \Vert^2 - \frac{\tr(\bfSigma_n)}{n}\right\vert>\varepsilon\right) \leq\frac{1}{\varepsilon^2 n^4} \Var\left(\sum_{i=1}^{n} \Vert X_{i,n}\Vert^2\right) = \frac{1}{\varepsilon^2 n^4} \sum_{i=1}^{n} \Var\left( \Vert X_{i,n}\Vert^2\right) \lesssim \frac{p}{\varepsilon^2 n^3},
    \end{align*}
    where we used that $X_{1,n}, \ldots X_{n,n}$ are independent. Consequently, Borel-Cantelli implies that
    \begin{align*}
        \frac{1}{n^2} \sum_{i=1}^{n} \Vert X_{i,n}\Vert^2 - \frac{\tr(\bfSigma_n)}{n}\conas 0,
    \end{align*}
    which shows that \eqref{eq_TL2} almost surely holds.
\end{proof}

\begin{lemma}\label{lem_sample_cov_op_norm_non_identical_distributed}
Suppose that assumptions \ref{ass_sample_to_dimension_ratio} - \ref{ass_spectral_norm_cov_matrix} are satisfied. Then,
\begin{align*}
    \left\Vert \bfSigmahat_n \right\Vert_2 = O_{\mathbb{P}}(1),
\end{align*}
where $\bfSigmahat_n= \frac{1}{n}\sum_{i=1}^{n} X_{i,n}X_{i,n}^\top$.
\end{lemma}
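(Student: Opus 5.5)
We need to show $\|\bfSigmahat_n\|_2 = O_{\mathbb{P}}(1)$, where $\bfSigmahat_n = \frac1n \mathbf{X}_n^\top\mathbf{X}_n$ and $\mathbf{X}_n^\top = \bfSigma_n^{1/2}\mathbf{Z}_n^\top$, with $\mathbf{Z}_n^\top=(Z_{1,n},\ldots,Z_{n,n})$ a $p\times n$ matrix of independent, centered, unit-variance entries. Note that the entries need not be identically distributed but satisfy a uniform eighth-moment bound.

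\emph{Step 1: reduction to the white case.} We first write
\begin{align*}
\|\bfSigmahat_n\|_2 = \Big\|\bfSigma_n^{1/2}\tfrac1n\mathbf{Z}_n^\top\mathbf{Z}_n\bfSigma_n^{1/2}\Big\|_2 \le \|\bfSigma_n\|_2\,\Big\|\tfrac1n\mathbf{Z}_n^\top\mathbf{Z}_n\Big\|_2 .
\end{align*}
By \ref{ass_spectral_norm_cov_matrix}, it therefore suffices to show $\|\frac1n\mathbf{Z}_n^\top\mathbf{Z}_n\|_2 = \|\mathbf{Z}_n\|_2^2/n = O_{\mathbb{P}}(1)$.

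\emph{Step 2: truncation.} We fix a level $\delta_n\sqrt n$ with $\delta_n\downarrow 0$ slowly, and replace each $Z_{i,n}(j)$ by its truncated, recentred and rescaled version $\tilde Z_{i,n}(j)$. A union bound together with Markov's inequality under the eighth-moment bound $M_8$ gives
\begin{align*}
\mathbb{P}\big(\exists\, i,j:\ |Z_{i,n}(j)|>\delta_n\sqrt n\big) \le np\,M_8\,\delta_n^{-8}n^{-4}\to0 .
\end{align*}
Thus the truncated matrix coincides with the original one with probability tending to one. The recentring and rescaling change the spectral norm by a negligible amount. The recentring shift is bounded by $\sqrt{np}\,\max|\mathbb{E}\,Z\mathds 1\{|Z|>\delta_n\sqrt n\}| \lesssim \sqrt{np}\,(\delta_n\sqrt n)^{-7}$, which is $o(\sqrt n)$. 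The variance correction satisfies $1-\Var(\tilde Z)\to0$ uniformly in $i,j$.

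\emph{Step 3: moment method.} For the truncated matrix, we apply the standard moment method (Bai--Silverstein / Yin--Bai--Krishnaiah). The entries are independent and centred, have unit variance, and are bounded by $\delta_n\sqrt n$. For $k=k_n\to\infty$ growing like $\log n$, one bounds
\begin{align*}
\mathbb{E}\,\tr\big(\tfrac1n\tilde{\mathbf Z}_n^\top\tilde{\mathbf Z}_n\big)^{k}\le p\,\big((1+\sqrt{\kappa_n})^2+\epsilon\big)^{k}.
\end{align*}
This uses the graph-counting combinatorics, which require only independence and the moment bounds $\mathbb{E}|\tilde Z|^{\ell}\le(\delta_n\sqrt n)^{\ell-8}M_8$; identical distribution is never used. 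Markov's inequality then yields $\mathbb{P}(\|\tilde{\mathbf Z}_n\|_2^2/n > (1+\sqrt\kappa)^2+2\epsilon)\to0$, which is even stronger than needed.

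\textbf{Main obstacle.} The main obstacle is Step 3, namely the combinatorial moment bound in the non-identically distributed, triangular-array setting. The plan is to cite the known theorem (Bai and Silverstein, \emph{Spectral Analysis of Large Dimensional Random Matrices}, Thm.~5.9), which is stated for independent but not necessarily identically distributed entries satisfying exactly such truncation conditions. I would verify that its hypotheses are met after Step 2, rather than redo the path counting. If only $O_{\mathbb{P}}(1)$ is desired, a cruder fallback would not suffice: bounding the spectral norm by the Frobenius norm gives only $O(p)$. So the moment method, or the cited result, is genuinely needed.
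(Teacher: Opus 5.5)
Your proposal is correct, but it takes a considerably heavier route than the paper. Your Step 1, $\|\bfSigmahat_n\|_2\le\|\bfSigma_n\|_2\,\|\mathbf{Z}_n\|_2^2/n$, is the same reduction the paper uses. For the white part, however, the paper does not truncate or count paths. It invokes Theorem 2 of Lata{\l}a (2005): for any matrix with independent centred entries,
$\mathbb{E}\|\mathbf{Z}_n\|_2\le C\bigl(\max_i(\sum_j\mathbb{E}Z_{ij}^2)^{1/2}+\max_j(\sum_i\mathbb{E}Z_{ij}^2)^{1/2}+(\sum_{i,j}\mathbb{E}Z_{ij}^4)^{1/4}\bigr)\lesssim\sqrt p+\sqrt n+(np\sqrt{M_8})^{1/4}\lesssim\sqrt n$.
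Markov's inequality then gives $\|\mathbf{Z}_n\|_2^2/n=O_{\mathbb{P}}(1)$ in one line. This needs only independence and uniformly bounded fourth moments, with no identical distribution, truncation or recentring.

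Your truncation plus Bai--Silverstein Theorem~5.9 argument is valid. The union bound works once $\delta_n\gg n^{-1/4}$, and the recentring shift is in fact $o(1)$. What it buys is a sharper conclusion, $\|\bfSigmahat_n\|_2\le\|\bfSigma_n\|_2\{(1+\sqrt{\kappa})^2+\epsilon\}$ with probability tending to one. The paper never needs this, since the lemma is only used to show $\|\mathbf{B}_n\|_2=O_{\mathbb{P}}(1)$. The cost is the truncation bookkeeping and more than four moments for the union bound. So your closing remark that the moment method is ``genuinely needed'' is overstated: a dimension-free expectation bound of Lata{\l}a type already suffices.

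Two small bookkeeping points:
\begin{itemize}
\item The rescaling in Step~2 is unnecessary, because Theorem~5.9 only requires the variances to converge uniformly to $\sigma^2$. If you do rescale, note that entrywise rescaling is a Hadamard product, not a scalar multiple. Its effect on the spectral norm then has to be controlled, e.g.\ via the Frobenius norm using $\max_{i,j}|1-s_{ij}|\lesssim(\delta_n\sqrt n)^{-6}$, where $s_{ij}^2$ is the variance of the truncated, recentred entry.
\item The bound $\mathbb{E}|\tilde Z|^\ell\le(\delta_n\sqrt n)^{\ell-8}M_8$ holds only for $\ell\ge 8$. For $3\le\ell<8$ you need the separate bound $\mathbb{E}|\tilde Z|^\ell\lesssim 1$, which still fits the growth condition of the cited theorem.
\end{itemize}
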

Lemma \ref{lem_sample_cov_op_norm_non_identical_distributed} follows from Theorem $2$ in \cite{Latala2005} together with Markov's inequality and the uniform spectral norm bound in assumption \ref{ass_spectral_norm_cov_matrix}.
\newline
For the following auxiliary results, we consider that $\bfSigma_n = \mathbf{I}_p$. Hence, the functions $v(\kappa, z)$ and $v^\prime(\kappa,z)$ are given by \eqref{eq_def_v_function_identity_cov} and \eqref{eq_def_v_derivative_function_identity_cov}, respectively. The first statement  corresponds to the isotropic case of Theorem $2.1$ of \cite{StarkDoernemann2026}.

\begin{lemma}\label{lem_det_equi_DS_identity_cov}
    Suppose that the assumptions \ref{ass_sample_to_dimension_ratio} - \ref{ass_ESD_conv} are satisfied. Let $\left\{\Theta_n\right\}_{n \geq 1}$ be a sequence of nonrandom $p\times p$ matrices. Then, for any compact set $K \subset \mathbb{C}\setminus \mathbb{R}_{\geq 0}$,
    \begin{align*}
        \sup_{\zeta\in K}\left |\tr\left( \Theta_{n}\left\{\mathbf{R}(\bfSigmahat_n, -\zeta) - \frac{1}{-\zeta v\left(\kappa_n,\zeta\right)-\zeta}\mathbf{I}_p\right\}\right)\right | =O_{\mathbb{P}}\left(\frac{\Vert \Theta_n\Vert_F}{\sqrt{n}} \right),
    \end{align*}
    where $\bfSigmahat_n=\frac{1}{n}\sum\limits_{i=1}^{n} X_{i,n}X_{i,n}^\top$.
\end{lemma}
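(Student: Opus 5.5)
The plan is to prove the bound pointwise in $\zeta$ for $\zeta\in K$ and then upgrade it to a uniform bound by an equicontinuity argument. Write $\mathbf{R}_n(\zeta)=\mathbf{R}(\bfSigmahat_n,-\zeta)=(\bfSigmahat_n-\zeta\mathbf{I}_p)^{-1}$. Since $K$ is compact in $\mathbb{C}\setminus\mathbb{R}_{\geq 0}$ and $\bfSigmahat_n\succeq 0$, we have $\|\mathbf{R}_n(\zeta)\|_2\le 1/\operatorname{dist}(\zeta,\mathbb{R}_{\ge0})\le C_K$ uniformly on $K$. The deterministic equivalent is $\bar{\mathbf{R}}_n(\zeta)=(-\zeta v(\kappa_n,\zeta)-\zeta)^{-1}\mathbf{I}_p$, and its denominator is bounded away from zero on $K$ by the standard Marchenko--Pastur bounds. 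Split the error as
\[
\tr(\Theta_n(\mathbf{R}_n-\bar{\mathbf{R}}_n))=\tr(\Theta_n(\mathbf{R}_n-\mathbb{E}\mathbf{R}_n))+\tr(\Theta_n(\mathbb{E}\mathbf{R}_n-\bar{\mathbf{R}}_n)).
\]

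\emph{Fluctuation term.} Use the martingale difference decomposition with respect to $\mathbb{E}_i=\mathbb{E}[\,\cdot\mid X_1,\dots,X_i]$. Applying Sherman--Morrison with the leave-one-out resolvent $\mathbf{R}_n^{(i)}$, each increment equals $(\mathbb{E}_i-\mathbb{E}_{i-1})$ of
\[
\frac{n^{-1}X_i^\top\mathbf{R}_n^{(i)}\Theta_n\mathbf{R}_n^{(i)}X_i}{1+n^{-1}X_i^\top\mathbf{R}_n^{(i)}X_i}.
\]
For complex $\zeta$ the denominator is bounded below by $c_K>0$, since $\Im$ and $\Re$ control it on $K$. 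Centring the quadratic form at $n^{-1}\tr(\mathbf{R}_n^{(i)}\Theta_n\mathbf{R}_n^{(i)})$ and using the standard quadratic form variance bound $\mathbb{E}|X^\top AX-\tr A|^2\lesssim \|A\|_F^2$ (fourth moments suffice) gives second moments of order $n^{-2}\|\Theta_n\|_F^2$ per increment. Summing over $n$ increments yields variance $O(\|\Theta_n\|_F^2/n)$.

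\emph{Bias term.} Use the identity $\mathbf{I}_p=\mathbf{R}_n(\bfSigmahat_n-\zeta\mathbf{I}_p)$ together with $\mathbf{R}_nX_i=\mathbf{R}_n^{(i)}X_i/(1+n^{-1}X_i^\top\mathbf{R}_n^{(i)}X_i)$. This gives
\[
\mathbb{E}\mathbf{R}_n=\bar{\mathbf{R}}_n+\text{error},
\]
where the error is obtained by replacing $n^{-1}X_i^\top\mathbf{R}_n^{(i)}X_i$ by $n^{-1}\tr\mathbb{E}\mathbf{R}_n$ and then $n^{-1}\tr\mathbb{E}\mathbf{R}_n$ by its limit through the self-consistent equation defining $v(\kappa_n,\zeta)$. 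After pairing with $\Theta_n$, each replacement costs $O(\|\Theta_n\|_F/\sqrt n)$ by Cauchy--Schwarz combined with the fluctuation bound above. The stability of the MP equation on $K$ turns the $O(n^{-1})$ trace error into the same order. This is the content of Theorem~2.1 of \cite{StarkDoernemann2026}, whose isotropic case we invoke directly rather than redoing.

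\emph{Uniformity.} For $\zeta,\zeta'$ in a neighbourhood of $K$, the resolvent identity gives
\[
|\tr(\Theta_n(\mathbf{R}_n(\zeta)-\mathbf{R}_n(\zeta')))|\le\sqrt p\,\|\Theta_n\|_F\,C_K^2|\zeta-\zeta'|,
\]
and the deterministic part satisfies the analogous bound. Take a finite $n^{-1}$-net of $K$ of polynomial cardinality. On the net we need tail bounds rather than variances, which come from higher moments of the martingale (Burkholder, using eighth moments). Lipschitz control then fills in between net points. Alternatively, one can use that the error is analytic in $\zeta$ and apply Cauchy's integral formula on a slightly larger contour, reducing the supremum to an $L^2$ integral over the contour. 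This handles the supremum without a union bound and is the route I would try first.

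The main obstacle is the bias step, specifically controlling the denominators $1+n^{-1}X_i^\top\mathbf{R}_n^{(i)}X_i$ and the stability of the self-consistent equation uniformly on $K$. For real negative $\zeta$ this is easy since all quantities are positive. Near the real axis, for complex $\zeta$, one needs the lower bound $|1+n^{-1}\tr\mathbf{R}|\ge c_K$, which follows from $\Im\zeta\cdot\Im(\cdot)\ge0$ or from positivity when $\Re\zeta<0$.
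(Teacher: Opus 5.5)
\paragraph{Comparison with the paper.} Your argument rests on the same external input as the paper. The paper obtains this lemma, supremum over $K$ included, directly as the isotropic case of Theorem~2.1 of \cite{StarkDoernemann2026}. You instead split $f_n(\zeta)=\tr(\Theta_n(\mathbf{R}_n(\zeta)-\bar{\mathbf{R}}_n(\zeta)))$ into a martingale fluctuation and a deterministic bias, prove the fluctuation bound yourself, cite the theorem only for the bias, and add a separate uniformity step.

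The fluctuation step is sound once you also account for the random denominator. Set $a_i=n^{-1}X_i^\top\mathbf{R}_n^{(i)}\Theta_n\mathbf{R}_n^{(i)}X_i$, $b_i=n^{-1}X_i^\top\mathbf{R}_n^{(i)}X_i$, $\bar a_i=n^{-1}\tr(\mathbf{R}_n^{(i)}\Theta_n\mathbf{R}_n^{(i)})$ and $\bar b_i=n^{-1}\tr\mathbf{R}_n^{(i)}$. The extra term $\bar a_i(b_i-\bar b_i)/((1+b_i)(1+\bar b_i))$ has second moment $\lesssim (p\Vert\Theta_n\Vert_F^2/n^2)\cdot n^{-1}$, which is again $O(n^{-2}\Vert\Theta_n\Vert_F^2)$.

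Your analytic route to uniformity also works. Cauchy's formula on finitely many circles in $\mathbb{C}\setminus\mathbb{R}_{\geq 0}$ around $K$ gives $\mathbb{E}\sup_{K}|f_n|^2\lesssim\sup_{\Gamma}\mathbb{E}|f_n|^2$, and your martingale constants are uniform on compact subsets of $\mathbb{C}\setminus\mathbb{R}_{\geq 0}$. However, this needs the bias bound uniformly on the larger set $\Gamma$, i.e.\ the cited theorem in its uniform form. In that form it already is the lemma, which is how the paper uses it. So your decomposition makes explicit which parts are elementary, but it does not reduce the dependence on the citation.

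\paragraph{Two side claims that fail as stated.} First, the net argument cannot give the rate. The deterministic Lipschitz constant $\sqrt p\,\Vert\Theta_n\Vert_F C_K^2$ forces a mesh of order $n^{-1}$, hence $\asymp n^2$ net points. A union bound using $q$-th moments then only yields $O_{\mathbb{P}}(n^{2/q}\Vert\Theta_n\Vert_F/\sqrt n)$. With the fourth moments of quadratic forms that eighth moments provide, you lose a factor $\sqrt n$. You would need chaining with moment bounds on increments, so the Cauchy route is the one to keep.

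Second, in the bias sketch, Cauchy--Schwarz does not give $O(\Vert\Theta_n\Vert_F/\sqrt n)$ for general $\Theta_n$. Put $\omega_n=n^{-1}\tr\mathbb{E}\mathbf{R}_n$. The replacement error $n^{-1}\sum_i\mathbb{E}[X_i^\top\Theta_n\mathbf{R}_n^{(i)}X_i\,(\omega_n-b_i)/((1+b_i)(1+\omega_n))]$ contains a quadratic form of size $|\tr(\Theta_n\mathbf{R}_n^{(i)})|$. This can be as large as $\Vert\Theta_n\Vert_{\tr}\le\sqrt p\,\Vert\Theta_n\Vert_F$, and it multiplies $b_i-\omega_n$, which is of order $n^{-1/2}$. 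The naive bound is therefore only $\Vert\Theta_n\Vert_{\tr}/\sqrt n$; for example, it gives $n^{-1/2}$ instead of $n^{-1}$ when $\Theta_n=\mathbf{I}_p/p$.

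Reaching the Frobenius scale requires two things:
\begin{itemize}
\item expanding $(1+b_i)^{-1}$ around $(1+\omega_n)^{-1}$ to second order;
\item using that $\tr(\Theta_n\mathbf{R}_n^{(i)})(b_i-n^{-1}\tr\mathbf{R}_n^{(i)})$ has conditional mean zero given $\{X_j\}_{j\neq i}$.
\end{itemize}
The same cancellation is what produces the $O(n^{-1})$ error for $\omega_n$ that your stability step presupposes; the naive argument gives only $O(n^{-1/2})$ there. This cancellation, not the lower bounds on the denominators, is the delicate part of the bias step, and it is exactly what the citation supplies. For the rank-one $\Theta_n$ actually used in the paper, $\Vert\Theta_n\Vert_{\tr}=\Vert\Theta_n\Vert_F$, and your naive sketch would essentially suffice.
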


\begin{lemma}\label{lem_det_equi_identity_cov}
    Suppose that the assumptions \ref{ass_sample_to_dimension_ratio} - \ref{ass_ESD_conv} are satisfied. Let $\{\Theta_n\}_{n\geq 1}$ be a sequence of nonrandom $p\times p$ matrices satisfying $\sup_{n\geq 1}\Vert \Theta_n\Vert_{\tr}= \sup_{n\geq 1}\tr((\Theta_n^\top \Theta_n)^{1/2}) < \infty$. Then, for any compact set $K \subset \mathbb{C}\setminus \mathbb{R}_{\geq 0}$,
    \begin{align*}
        \sup_{\zeta \in K}\left | \tr \left(\Theta_n \left\{\mathbf{R}(\bfSigmahat_n, -\zeta)-\frac{1}{-\zeta v(\kappa,\zeta)-\zeta}\mathbf{I}_p\right\}\right)\right |  = o_{\mathbb{P}}(1).
    \end{align*}
\end{lemma}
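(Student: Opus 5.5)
We derive Lemma \ref{lem_det_equi_identity_cov} from Lemma \ref{lem_det_equi_DS_identity_cov} by a triangle inequality:
\begin{align*}
\tr\left(\Theta_n\left\{\mathbf{R}(\bfSigmahat_n,-\zeta)-\tfrac{1}{-\zeta v(\kappa,\zeta)-\zeta}\mathbf{I}_p\right\}\right)
= \tr\left(\Theta_n\left\{\mathbf{R}(\bfSigmahat_n,-\zeta)-\tfrac{1}{-\zeta v(\kappa_n,\zeta)-\zeta}\mathbf{I}_p\right\}\right)
+ \tr(\Theta_n)\left(\tfrac{1}{-\zeta v(\kappa_n,\zeta)-\zeta}-\tfrac{1}{-\zeta v(\kappa,\zeta)-\zeta}\right).
\end{align*}
We then bound the two terms on the right separately, uniformly over $\zeta\in K$.

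\textbf{First term.} The key observation is that $\Vert\Theta_n\Vert_F\le\Vert\Theta_n\Vert_{\tr}$. This holds because the singular values $s_j$ satisfy $\sum_j s_j^2\le(\sum_j s_j)^2$. Consequently $\sup_n\Vert\Theta_n\Vert_F<\infty$. Lemma \ref{lem_det_equi_DS_identity_cov} then gives an $O_{\mathbb{P}}(n^{-1/2})$ bound uniformly on $K$, so this term is $o_{\mathbb{P}}(1)$.

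\textbf{Second term.} This term is deterministic, and $|\tr(\Theta_n)|\le\Vert\Theta_n\Vert_{\tr}$ is bounded. It therefore suffices to show that
\[
g_n(\zeta)=\frac{1}{-\zeta(v(\kappa_n,\zeta)+1)}
\]
converges to $g(\zeta)=\frac{1}{-\zeta(v(\kappa,\zeta)+1)}$ uniformly on $K$. We plan to proceed in two steps.
\begin{enumerate}[label=(\roman*)]
\item We show $v(\kappa_n,\cdot)\to v(\kappa,\cdot)$ uniformly on $K$. One route is the explicit formula \eqref{eq_def_v_function_identity_cov}, which is jointly continuous in $(\kappa,z)$ away from the branch cut; this may be the route behind Lemma \ref{lem_sequence_stieltjes_transform_arguments_continuous}. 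Alternatively, $\underline{F}^{\kappa_n,\delta_{\{1\}}}\to\underline{F}^{\kappa,\delta_{\{1\}}}$ weakly and $K$ is at positive distance from $\mathbb{R}_{\ge0}\supset$ the supports, so the Stieltjes transforms converge uniformly on $K$. Equicontinuity follows from $|\partial_z (t-z)^{-1}|\le \mathrm{dist}(K,\mathbb{R}_{\geq 0})^{-2}$.
\item We bound the denominator $|\zeta(v(\kappa,\zeta)+1)|$ away from zero on $K$. This does not follow immediately from (i), and it is the main obstacle. We identify $-1/(\zeta(1+v))$ as the deterministic equivalent, i.e.\ the limit of the Stieltjes transform $m(\zeta)$ of $F^{\kappa,\delta_{\{1\}}}$; this uses the Marchenko--Pastur identity $m=-1/(\zeta(1+v))$ for $H=\delta_{\{1\}}$. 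Since $m$ is finite on $\mathbb{C}\setminus\mathbb{R}_{\ge0}$, the quantity $\zeta(1+v)$ never vanishes there. By continuity and compactness, $|\zeta(1+v(\kappa,\zeta))|\ge c_K>0$ on $K$.
\end{enumerate}
Together with (i), this shows that the same lower bound holds for $\kappa_n$ for large $n$. Hence $g_n\to g$ uniformly on $K$, and combining with the bound on the first term completes the proof.
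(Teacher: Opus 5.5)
Your proposal is correct and follows the paper's route: the paper deduces this lemma from Lemma \ref{lem_det_equi_DS_identity_cov} using $\sup_n\Vert\Theta_n\Vert_{\tr}<\infty$ (which bounds both $\Vert\Theta_n\Vert_F$ and $|\tr(\Theta_n)|$) together with $\kappa_n\to\kappa$ and the local uniform convergence of $v(\kappa_n,\cdot)$ to $v(\kappa,\cdot)$, and your triangle-inequality split makes this explicit. Your step (ii), which the paper leaves implicit, is valid, although the non-vanishing of $1+v(\kappa,\zeta)$ on $\mathbb{C}\setminus\mathbb{R}_{\geq 0}$ follows more directly from the Stieltjes conditions, since $\Im v\neq 0$ off the real axis and $v>0$ for $\zeta<0$.
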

Lemma \ref{lem_det_equi_identity_cov} follows from Lemma \ref{lem_det_equi_DS_identity_cov}, since $\sup_{n \geq 1} \Vert \Theta_n\Vert_{\tr} < \infty$ and $\kappa_n \to \kappa$, together with the local uniform convergence of $v(\kappa_n, \cdot)$ to $v(\kappa, \cdot)$.
\begin{lemma}\label{lem_derivative_det_equi_identity_cov}
     Suppose that the assumptions \ref{ass_sample_to_dimension_ratio} - \ref{ass_ESD_conv} are satisfied. Let $\{\Theta_n\}_{n\geq 1}$ be a sequence of nonrandom $p\times p$ matrices satisfying $\sup_{n\geq 1}\Vert \Theta_n\Vert_{\tr}= \sup_{n\geq 1}\tr((\Theta_n^\top \Theta_n)^{1/2}) < \infty$. Then, for any compact set $K \subset \mathbb{C}\setminus \mathbb{R}_{\geq 0}$,
    \begin{align*}
        &\sup_{\zeta \in K} \left\vert \frac{d}{d\zeta}\tr\left[ \Theta_n \left\{\mathbf{R}(\bfSigmahat_n, -\zeta)-\frac{1}{-\zeta v(\kappa,\zeta)-\zeta}\mathbf{I}_p\right\}\right]\right\vert
        \\
        =& \sup_{\zeta \in K} \left\vert \tr\left[\Theta_n \left\{\mathbf{R}(\bfSigmahat_n, -\zeta)^2- \frac{v(\kappa, \zeta)+\zeta v^\prime(\kappa, \zeta)+1}{\left(-\zeta v(\kappa, \zeta) - \zeta\right)^2}\mathbf{I}_p \right\}\right]\right\vert =o_{\mathbb{P}}(1).
    \end{align*}
\end{lemma}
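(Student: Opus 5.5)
The statement to prove is Lemma \ref{lem_derivative_det_equi_identity_cov}. The plan is to derive it from Lemma \ref{lem_det_equi_identity_cov}, using analyticity in $\zeta$ and Cauchy's integral formula for derivatives. For fixed $n$, set
\begin{align*}
    f_n(\zeta) = \tr\left[\Theta_n\left\{\mathbf{R}(\bfSigmahat_n,-\zeta) - \frac{1}{-\zeta v(\kappa,\zeta)-\zeta}\mathbf{I}_p\right\}\right], \qquad \zeta \in \mathbb{C}\setminus\mathbb{R}_{\geq 0}.
\end{align*}
The function $\zeta\mapsto \mathbf{R}(\bfSigmahat_n,-\zeta)=(\bfSigmahat_n-\zeta\mathbf{I}_p)^{-1}$ is holomorphic off the spectrum of $\bfSigmahat_n$, which lies in $\mathbb{R}_{\geq 0}$. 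Its derivative in $\zeta$ is $\mathbf{R}(\bfSigmahat_n,-\zeta)^2$. The function $v(\kappa,\cdot)$ is holomorphic on $\mathbb{C}\setminus\mathbb{R}_{\geq 0}$, being a Stieltjes transform. Moreover $-\zeta v(\kappa,\zeta)-\zeta=-\zeta(v+1)$ does not vanish there. Indeed, $\zeta(v(\kappa,\zeta)+1)=0$ would contradict $\Im(v)\Im(\zeta)>0$ off the real axis, and $v>0$ on the negative half line. Hence $f_n$ is holomorphic on $\mathbb{C}\setminus\mathbb{R}_{\geq 0}$.

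Next I compute $f_n'$ directly to confirm it matches the middle expression. The derivative of $\frac{1}{-\zeta(v+1)}$ equals $\frac{(v+1)+\zeta v'}{(\zeta(v+1))^2}$. This is exactly the stated term $\frac{v+\zeta v'+1}{(-\zeta v-\zeta)^2}$, which gives the claimed equality of the two suprema.

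For the $o_{\mathbb{P}}(1)$ bound, fix a compact $K\subset\mathbb{C}\setminus\mathbb{R}_{\geq 0}$. Choose $\rho>0$ with $2\rho<\operatorname{dist}(K,\mathbb{R}_{\geq 0})$ and set $K'=\{\zeta:\operatorname{dist}(\zeta,K)\leq\rho\}$. This $K'$ is compact and still contained in $\mathbb{C}\setminus\mathbb{R}_{\geq 0}$. For $\zeta\in K$, Cauchy's formula on the circle $|w-\zeta|=\rho\subset K'$ gives
\begin{align*}
    |f_n'(\zeta)| = \left|\frac{1}{2\pi i}\oint_{|w-\zeta|=\rho}\frac{f_n(w)}{(w-\zeta)^2}\,dw\right| \leq \frac{1}{\rho}\sup_{w\in K'}|f_n(w)|.
\end{align*}
Therefore $\sup_{\zeta\in K}|f_n'(\zeta)|\leq \rho^{-1}\sup_{w\in K'}|f_n(w)|$. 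Lemma \ref{lem_det_equi_identity_cov}, applied with the compact set $K'$, shows the right-hand side is $o_{\mathbb{P}}(1)$.

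The main point requiring care is that Lemma \ref{lem_det_equi_identity_cov} controls $f_n$ uniformly on compacts, not only pointwise. Without that uniformity the Cauchy estimate would not apply, so the argument relies on the supremum form of that lemma. The remaining checks are routine: holomorphy in $\zeta$ (the random resolvent is analytic for every realization, so no measurability issue arises) and the nonvanishing of the denominator verified above.
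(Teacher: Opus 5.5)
Your argument is correct, but it takes a different route from the paper. The paper does not derive this lemma from Lemma~\ref{lem_det_equi_identity_cov}. Instead it invokes Corollary~2.1 of \cite{StarkDoernemann2026} with $q=1$, which gives a deterministic equivalent for $\mathbf{R}(\bfSigmahat_n,-\zeta)^2$ directly at ratio $\kappa_n$. It then passes from $\kappa_n$ to $\kappa$ using local uniform convergence of both $v(\kappa_n,\cdot)$ and $v^\prime(\kappa_n,\cdot)$.

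You instead use that $f_n$ is holomorphic for every realization. The deterministic Cauchy estimate $\sup_{K}|f_n^\prime|\le\rho^{-1}\sup_{K^\prime}|f_n|$ then reduces the claim to the uniform-on-compacts statement of Lemma~\ref{lem_det_equi_identity_cov}. The replacement of $\kappa_n$ by $\kappa$, including convergence of the derivative term, is inherited automatically rather than argued separately.

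What each route buys:
\begin{itemize}
\item Your route needs no second external result and generalizes immediately. The $q$-th order Cauchy estimate handles $\mathbf{R}(\bfSigmahat_n,-\zeta)^{q+1}$. Applying the same bound to Lemma~\ref{lem_det_equi_DS_identity_cov} would even give the rate $O_{\mathbb{P}}(\Vert\Theta_n\Vert_F/\sqrt{n})$ at ratio $\kappa_n$.
\item The paper's route is shorter on the page, since it delegates the squared-resolvent equivalent entirely to the cited literature.
\end{itemize}
You were right to single out the supremum over $\zeta$ in Lemma~\ref{lem_det_equi_identity_cov} as the ingredient that makes the Cauchy step work.
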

Lemma \ref{lem_derivative_det_equi_identity_cov} follows from Corollary $2.1$ in \cite{StarkDoernemann2026} with $q=1$, using the local uniform convergence of $v(\kappa_n,\cdot)$ and its first derivative with respect to $\zeta$.

\begin{lemma}\label{lem_sequence_stieltjes_transform_arguments_continuous}
    Let $(\kappa_n, z_n)_{n\geq 1}$ be a sequence in $(0,\infty)\times \supp(\underline{F}^{\kappa, \delta_{\{1\}}})^c \cap \mathbb{R}_{<0}$ with limit $(\kappa, z) \in (0,\infty)\times \supp(\underline{F}^{\kappa, \delta_{\{1\}}})^c \cap \mathbb{R}_{<0}$. Then, 
    \begin{align*}
        v(\kappa_n,z_n) \to v(\kappa,z).
    \end{align*}
\end{lemma}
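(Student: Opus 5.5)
We prove Lemma \ref{lem_sequence_stieltjes_transform_arguments_continuous} from the explicit closed form \eqref{eq_def_v_function_identity_cov} rather than from general Stieltjes-transform theory, since that formula is jointly continuous in $(\kappa,z)$ wherever the square root is well behaved. For $z<0$ and $\kappa>0$, write $D(\kappa,z)=(1-\kappa+z)^2-4z$. Because $-4z>0$, we have $D(\kappa,z)>0$ on all of $(0,\infty)\times\mathbb{R}_{<0}$. Therefore the real square root $\sqrt{D}$ is well defined and jointly continuous on this open set, and the map
\[
(\kappa,z)\mapsto \frac{-(1-\kappa+z)-\sqrt{D(\kappa,z)}}{2z}
\]
is continuous there, since the denominator $2z$ is bounded away from zero near any limit point $z<0$.

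The key step is to confirm that the branch selected in \eqref{eq_def_v_function_identity_cov} by the Stieltjes condition, when restricted to the negative half-line, agrees with this continuous real-valued expression. The condition $\Im(v)\Im(z)>0$ fixes the branch only off the real axis. On $\mathbb{R}_{<0}$ we identify the branch by the property $v(\kappa,z)>0$, which holds because $v(\kappa,z)=\int (t-z)^{-1}\,d\underline{F}^{\kappa,\delta_{\{1\}}}(t)$ with $t\ge 0>z$.

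The two candidate roots are $(-(1-\kappa+z)\mp\sqrt{D})/(2z)$, and their product is $1/z<0$ because they solve $zv^2+(1-\kappa+z)v+1=0$. Hence exactly one root is positive. With $z<0$, the root carrying $-\sqrt{D}$ in the numerator is the positive one, since its numerator $-(1-\kappa+z)-\sqrt{D}$ is negative: $\sqrt{D}>|1-\kappa+z|$ because $D-(1-\kappa+z)^2=-4z>0$. So the positive-root selection is the fixed formula with the real positive square root. The same formula also equals the boundary value of the holomorphic Stieltjes branch, because $v(\kappa,\cdot)$ is holomorphic across $\mathbb{R}_{<0}$, which lies outside the support.

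It remains to check that the sequence stays in a region where the formula applies. The hypothesis places $(\kappa_n,z_n)$ in $(0,\infty)\times\mathbb{R}_{<0}$ and its limit $(\kappa,z)$ in the same open set. Thus for $n$ large, $z_n\le z/2<0$ and $\kappa_n$ lies in a compact subset of $(0,\infty)$. Joint continuity of the formula then gives $v(\kappa_n,z_n)\to v(\kappa,z)$.

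The main obstacle is the branch identification: we must make sure the root chosen via $\Im(v)\Im(z)>0$ off the axis coincides on $\mathbb{R}_{<0}$ with the explicit positive root, uniformly in $\kappa$. The product-of-roots argument above resolves this. Alternatively, we could approximate $z_n$ by $z_n+i\eta$, use local uniform convergence on compacts of $\mathbb{C}\setminus\mathbb{R}_{\ge0}$, and let $\eta\downarrow0$. I would present the direct algebraic argument.
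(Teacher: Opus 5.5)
Your proposal is correct and follows essentially the same approach as the paper, which simply invokes the closed form \eqref{eq_def_v_function_identity_cov} for $v(\kappa_n,z_n)$ and concludes by continuity in $(\kappa,z)$. Your additional checks fill in what the paper leaves implicit: that $(1-\kappa+z)^2-4z>0$ on $(0,\infty)\times\mathbb{R}_{<0}$, and that the root carrying $-\sqrt{D}$ is the unique positive root (via the root product $1/z<0$), so it is the correct Stieltjes branch on the negative half-line.
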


\begin{proof}
    Since 
    \begin{align*}
        v_n(\kappa_n,z_n) = v(\kappa_n,z_n) = \frac{-(1-\kappa_n+z_n)-\sqrt{(1-\kappa_n+z_n)^2-4z_n}}{2z_n},
    \end{align*}
    the claim follows due to continuity.
\end{proof}

{\bf Funding } The work of HD and TL has been  supported by the Deutsche Forschungsgemeinschaft (DFG); Research unit 5381 \textit{Mathematical Statistics in the Information Age} (460867398).
The work of ND is partially supported by  the Aarhus University Research
Foundation (AUFF),  project numbers 47221 and 47388, and by a RECRUIT Sabbatical
Grant from the Novo Nordisk Foundation, project number 52629. ND gratefully acknowledges the hospitality of the Department of
Statistics at Harvard University during the preparation of this
manuscript.

\setlength{\bibsep}{1pt}
\begin{small}
\bibliography{references}
\end{small}
\end{document}